\newenvironment{repthm}[1]
  {\innercustomthm}
  {\endinnercustomthm}
\newtheorem{theorem}{Theorem}[section]
\newtheorem{proposition}[theorem]{Proposition}
\newtheorem{corollary}[theorem]{Corollary}
\newtheorem{counterexample}[theorem]{Counterexample}
\newtheorem{conjecture}[theorem]{Conjecture}
\newcommand{\bracenom}{\genfrac{\langle}{\rangle}{0pt}{}}
\theoremstyle{definition}
\newtheorem{definition}[theorem]{Definition}
\title{\textbf{LABELED CHIP-FIRING ON BINARY TREES WITH $2^n-1$ CHIPS}}
\author{Gregg Musiker, Son Nguyen}
\affil{University of Minnesota - Twin Cities}
\date{}
\begin{document}
\setlength{\abovedisplayskip}{0pt}
\setlength{\belowdisplayskip}{0pt}
\setlength{\abovedisplayshortskip}{0pt}
\setlength{\belowdisplayshortskip}{0pt}
	
	\maketitle
	
	\begin{abstract}
	\justify
	We study labeled chip-firing on binary trees starting with $2^n-1$ chips initially placed at the root. We prove a sorting property of terminal configurations of the process. We also analyze the endgame moves poset and prove that this poset is a modular lattice.
		
		\textit{Keywords:} Labeled chip-firing, binary tree
	\end{abstract}
	
	\section{Introduction}\label{sec:intro}
	
	\justify
	Unlabeled chip-firing is the process in which a collection of indistinguishable chips is placed at the nodes of a graph. If a node has at least as many chips as it has neighbors, it can “fire” by sending one chip to each of its neighbors. The process terminates if no site has enough chips to fire, and the configuration when the process terminates is called the terminal configuration or the stable configuration. We refer the readers to \cite{klivans2018mathematics} for a comprehensive study of unlabeled chip-firing; however, we would like to stress the following fundamental result.

    \begin{theorem}[{\cite[Theorem 2.2.2]{klivans2018mathematics}}]\label{thm:confluence} Let $c$ be a configuration.
    
        \begin{enumerate}
            \item \textbf{Local Confluence} Suppose $c_1$ and $c_2$ are two configurations which are both reachable from $c$ after one firing. Then there exists a common configuration $d$ reachable from both $c_1$ and $c_2$ after a single firing.
            \item \textbf{Global Confluence} If a stable configuration $c_s$ is reachable from $c$ after a finite number of legal firing moves, then $c_s$ is the unique stable configuration reachable from $c$.
        \end{enumerate}        
    \end{theorem}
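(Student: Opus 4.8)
The plan is to prove the two parts in order, obtaining global confluence as a consequence of local confluence.

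For \textbf{local confluence}, I would first dispose of the degenerate case: if $c_1$ and $c_2$ arise by firing the \emph{same} vertex, then $c_1=c_2$ and there is nothing to prove. So suppose $c_1$ comes from firing a vertex $v$ and $c_2$ from firing a distinct vertex $w$, both legal from $c$, i.e. $c(v)\ge\deg(v)$ and $c(w)\ge\deg(w)$. The key observation is that firing $v$ never removes chips from $w$: it can only add one, if $w\sim v$. Hence $c_1(w)\ge c(w)\ge\deg(w)$, so $w$ is still fireable in $c_1$; symmetrically $v$ is still fireable in $c_2$. Let $d_1$ be obtained by firing $w$ in $c_1$, and $d_2$ by firing $v$ in $c_2$. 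I would then verify $d_1=d_2$ by writing the net change at each vertex $u$ after firing both $v$ and $w$: the vertex $v$ loses $\deg(v)$ and gains $[v\sim w]$; the vertex $w$ loses $\deg(w)$ and gains $[v\sim w]$; every other $u$ gains $[u\sim v]+[u\sim w]$. Each of these is manifestly symmetric in $v$ and $w$, hence independent of firing order, so $d_1=d_2=:d$ is the desired common configuration.

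For \textbf{global confluence}, the point I would emphasize is that the statement just proved is in fact the strong \emph{diamond property} for the one-step firing relation $\to$: when $v\ne w$ one also has $c_1\ne c_2$ (they already differ at $v$, since $c_1(v)=c(v)-\deg(v)$ while $c_2(v)=c(v)+[v\sim w]$), so any two distinct one-step successors of $c$ possess a common one-step successor. It is classical in abstract rewriting that the diamond property implies confluence of the reflexive–transitive closure $\to^*$, and — crucially — this requires no termination hypothesis, which matters because unrestricted chip-firing need not terminate. I would establish it in two stages: first a strip lemma (if $c\to c_1$ in one step and $c\to^* c_2$ in $n$ steps, then there is $d$ with $c_1\to^* d$ and $c_2\to d$ in at most one step), proved by induction on $n$ using local confluence at each stage; then full confluence (if $c\to^* c_1$ and $c\to^* c_2$ then they admit a common $\to^*$-descendant), proved by induction on the length of the first reduction, applying the strip lemma to push the diamonds through.

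Finally, to deduce the theorem: given stable configurations $c_s$ and $c_s'$ both reachable from $c$, confluence produces a $d$ with $c_s\to^* d$ and $c_s'\to^* d$. But a stable configuration admits no legal firing move, so the only configuration reachable from it is itself; therefore $d=c_s$ and $d=c_s'$, whence $c_s=c_s'$.

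I expect the substantive content to be modest: a single order-independence computation in local confluence and a standard induction for the diamond-to-confluence passage. The main thing to get right — a subtlety rather than a deep obstacle — is recognizing that the hypothesis is the diamond property, not the weaker local confluence of Newman's lemma, so that global confluence follows without any well-foundedness assumption on the firing relation; the careful bookkeeping in the strip lemma (keeping one side to a single step) is where a fully rigorous writeup would spend its effort.
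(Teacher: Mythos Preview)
The paper does not prove this theorem: it is quoted from Klivans' book \cite{klivans2018mathematics} (as Theorem 2.2.2 there) and used as background, both in the introduction and again at the start of Section~\ref{sec:unlabeled}. So there is no in-paper proof to compare against.

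That said, your argument is correct and is essentially the standard one. The local-confluence computation is exactly right, and your observation that chip-firing satisfies the full diamond property (not merely weak local confluence) is the appropriate one here: the hypothesis of part~2 only assumes that \emph{some} firing sequence from $c$ terminates, not that all do, so Newman's lemma is not directly available and one wants confluence without a global termination assumption. Your strip-lemma induction is the clean way to obtain that. One small caveat worth making explicit in a fully rigorous writeup is the presence of loops (as at the root in this paper's setting): firing $v$ then still changes $c(v)$ by $-\deg(v)$ plus the number of loop-edges at $v$, but this is independent of whether $w$ was fired, so the order-independence and the inequality $c_1(v)\ne c_2(v)$ go through unchanged.
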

	
	Labeled chip-firing is a variation of unlabeled chip-firing, in which every chip is given an integer label, and additional rules are set to govern which chip moves in each direction during a firing move. Because the chips are now labeled, local confluence may no longer hold, and hence global confluence may not hold either. For example, consider the labeled chip-firing process on an infinite one-dimensional line with $N$ chips labeled $1,\ldots,N$ initially placed at the origin. When a node has at least two chips, it can fire any two chips, sending the smaller chip to the left and the larger chip to the right. However, if one starts with three chips, we instead see that any of the stable configurations in Figure \ref{fig:3chipsPosConfig} can happen.

    \begin{figure}[h!]
        \centering
        \includegraphics[width = 0.8\textwidth]{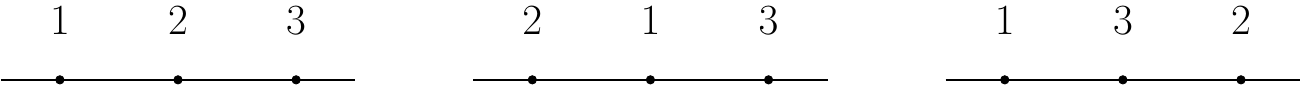}
        \caption{Possible stable configurations with 3 chips}
        \label{fig:3chipsPosConfig}
    \end{figure}
	
	Nonetheless, in some special cases, one still have global confluence despite the lack of local confluence. It was proved in \cite{hopkins2016sorting} and \cite{klivans2022confluence} that the labeled chip-firing process on an infinite one-dimensional line with an even number of chips initially placed at the origin terminates in a unique configuration regardless of the order in which nodes fire and regardless of the choice of chips made at each node. Moreover, in the unique terminal configuration, the chips are in sorted order.
	
    In this paper, we study labeled chip-firing on an infinite binary tree. Specifically, we consider an infinite binary tree with a self-loop at the root and $N=2^n-1$ chips initially placed at the root. The chips are labeled from $1,\ldots,N$.
    We apply the rule such that when a node fires three chips, it sends the smallest chip to its left child, the largest chip to its right child, and the middle chip to its parent. If the root fires, it sends the middle chip to itself (via the self-loop). In this setup, however, global confluence does not hold. For instance, when starting with $7$ chips, both terminal configurations in Figure \ref{fig:7chipsPosConfig} can happen.

    \begin{figure}[h!]
        \centering
        \includegraphics[width = \textwidth]{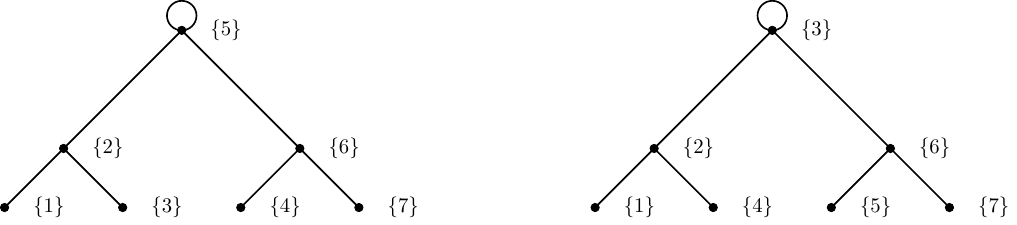}
        \caption{Two possible terminal configurations with $7$ chips}
        \label{fig:7chipsPosConfig}
    \end{figure}
    
    Nevertheless, we do have the following theorem on the sorting property of the terminal configuration:
    
    \begin{theorem} \label{thm:main-thm}
    In labeled chip-firing on an infinite binary tree with $2^n-1$ chips (labeled from $1$ to $2^n-1$) initially placed at the origin, the terminal configuration always has one chip at each node of the first $n$ levels. Moreover, the bottom straight left and right descendants (see Section \ref{sec:def}) of a node contain the smallest and largest chips among its subtree.
    \end{theorem}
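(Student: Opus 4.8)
The plan is to prove the two assertions of the theorem in turn.

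\emph{The terminal configuration fills the first $n$ levels.} I would pass to the underlying \emph{unlabeled} process. Forgetting the labels, each firing still sends one chip from an active node to each of its three neighbours (for the root, one of those neighbours is the root itself, via the loop), so the label‑free trajectory is a legal unlabeled chip‑firing run whose terminal configuration is stable. By Global Confluence (Theorem~\ref{thm:confluence}), the unlabeled process started from $2^{n}-1$ chips at the root has a \emph{unique} stable configuration, so it is enough to exhibit one. The configuration with exactly one chip on each of the $1+2+\cdots+2^{n-1}=2^{n}-1$ nodes of the first $n$ levels is clearly stable, and I would check that it is reachable by a recursive firing sequence: fire the root $2^{n-1}-1$ times, so that each child receives $2^{n-1}-1$ chips and the root keeps one chip, then recursively stabilize the two subtrees (whose internal firings also return a controlled number of chips to the root), and finally clear the root with a short explicit sequence. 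Hence this is the unique stable configuration and the labeled process must terminate there. A useful rigidity falls out: by a flow count across each tree edge, if $F_v$ is the number of times $v$ fires then the net number of chips ever crossing the edge from a node $u$ to a child $w$ equals $F_u-F_w$, and this must equal the number of chips ending in the subtree of $w$, namely $2^{\,n-\mathrm{level}(w)}-1$ when $\mathrm{level}(w)\le n-1$ and $0$ otherwise. This forces $F_v$ to depend only on $\mathrm{level}(v)$, with $F_v=0$ for every $v$ at level $\ge n-1$ (so leaves never fire) and $F_v=1$ at level $n-2$.

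\emph{The sorting property: two observations.} First, chip $1$ is the global minimum, so among any three fired chips it is the smallest and is never the middle one; hence chip $1$ can only move to a \emph{left} child, so it stays on the straight‑left path $\mathrm{root}=p_0,p_1,p_2,\dots$ and its final position is some $p_j$. Symmetrically, chip $2^{n}-1$ only moves to right children. Second, the rigidity above pins down exactly how often each node fires.

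\emph{The heart of the argument: chip $1$ reaches the straight‑left leaf $\ell(\mathrm{root})=p_{n-1}$.} Suppose $j\le n-2$. Once chip $1$ sits at $p_j$, the node $p_j$ can fire only while it holds at least four chips — otherwise firing any three of its chips would push chip $1$ downward — and each such firing removes three chips from $p_j$. I would combine this forced surplus at $p_j$ with the rigid firing counts and with the fact that a node's chip supply is bounded by how often its parent can fire (the root in particular loses two chips per firing while starting from only $2^{n}-1$): propagating the estimate up the straight‑left path shows there are simply never enough chips available to keep feeding $p_j$ the surplus it would need, a contradiction. For $n=3$ this is already transparent — the root can fire only twice before needing resupply from below, which is itself too small, so it cannot complete all $F_0=4$ of its firings unless chip $1$ has already descended to the leaf. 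I expect this capacity estimate to be the main obstacle. The reflected argument places chip $2^{n}-1$ at $r(\mathrm{root})$.

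\emph{From the root to an arbitrary node.} Finally I would induct on $n$, using the decoupling at the root's last firing: after the root fires for the last time it never fires again, and since the terminal configuration has a single chip at the root, neither child of the root may fire afterwards (that would overflow the root); a careful version of this observation, pushed downward, shows the whole left subtree is frozen after the left child's last firing, apart from one last chip delivered by the root. Consequently each subtree of the root reaches its terminal arrangement essentially as an autonomous copy of the $(n-1)$‑level process on its $2^{n-1}-1$ chips — only the relative order of those chips matters for the sorting conclusion, so the inductive hypothesis applies — and inside each subtree the chip that plays the role of ``$1$'' lands at its straight‑left leaf, which is exactly the minimum of that subtree (here one again invokes the core step of the previous paragraph, applied within the subtree, to rule out that minimum staying at the subtree's root). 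Assembling these facts with ``chip $1$ at $\ell(\mathrm{root})$, chip $2^{n}-1$ at $r(\mathrm{root})$'' yields the minimum/maximum statement at every node, completing the proof.
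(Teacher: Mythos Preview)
Your treatment of the first assertion (one chip per node on the first $n$ levels) is fine and matches the paper's Corollary~\ref{cor:one-chip}: forget labels, invoke Global Confluence, exhibit the desired stable configuration.

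The second assertion is where your plan and the paper diverge, and your plan has two genuine gaps.

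\textbf{The capacity estimate does not close.} You argue that if chip $1$ ends at an interior node $p_j$, then every subsequent fire of $p_j$ needs $\ge 4$ chips, and a chip-supply bound rules this out. But counting chips alone is not enough; the \emph{order} of fires is what actually matters. Already for $n=3$: the root starts with $7$ chips and loses a net of $2$ per fire, so it can fire \emph{three} times (not two) before needing resupply, leaving it with a single chip; node $2$ then receives three chips in total before it can possibly fire. Nothing in a pure capacity count prevents the root from firing a fourth time first (after resupply from node $3$), delivering a fourth chip to node $2$, after which node $2$ could fire three non-minimal chips and strand chip $1$. What actually rules this out is an \emph{ordering} fact: the root's last fire must come after node $2$'s last fire (else the root finishes with $\ge 2$ chips). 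The paper makes exactly this kind of argument systematic in Proposition~\ref{prop:fire-order}: for every endgame move one has $\bigl(\lfloor i/2\rfloor,j\bigr)<(i,j)<\bigl(\lfloor i/2\rfloor,j+1\bigr)$, and each such fire occurs with \emph{exactly three} chips. That ``exactly three'' is the crucial statement your capacity sketch never reaches, and it is what forces chip $1$ downward at every endgame fire along the left spine.

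\textbf{The induction step does not decouple.} You claim that after the root's last fire, each subtree behaves as an autonomous copy of the $(n-1)$-level process, up to relabeling. But node $2$ does not have a self-loop: at every one of its fires its middle chip goes to the root and a \emph{different} chip may come back. Thus the multiset of chips inside the left subtree changes throughout the process, not just via ``one last chip delivered by the root.'' So the inductive hypothesis about the smallest chip of a \emph{fixed} $2^{n-1}-1$-chip instance does not transfer; the subtree minimum you need to track is the smallest chip \emph{ever present} in that subtree, and you have not shown it arrives early enough to be swept to the leaf.

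The paper avoids both issues by a different route. It first proves the endgame ordering (Proposition~\ref{prop:fire-order}), deduces local confluence in the endgame (Corollary~\ref{cor:confluence}), and then, since the order no longer matters, fires the endgame in explicit ``waves.'' For any node $i$ that is a right child, it tracks the smallest chip $c$ ever to enter the subtree $(i)$: inside $(i)$, $c$ always goes left; and since $i$ alternately sends up and receives back one chip with its parent during the endgame (receiving a chip at least as large as it sent, because $i$ is a right child), $c$ cannot first enter $(i)$ during the endgame. Hence $c$ is already present at the start of the first wave and is carried straight down to the bottom straight-left descendant. No induction on $n$ and no global capacity bound is needed.
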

    
    Examples of reachable terminal configurations for small cases of $n$ can be seen in Figure \ref{Figure 1}. Consider the node containing chip $11$ in the case of $n=4$, for instance, its bottom straight left and right descendants contain chip $9$ and $15$, which are the smallest and largest chips among its subtree.
    
    After proving Theorem \ref{thm:main-thm} in Sections \ref{sec:unlabeled} and \ref{sec:labeled}, and observing appearances of binary expansions (see Corollary \ref{cor:pattern}) and Eulerian numbers (see Proposition \ref{prop:level-fire}) along the way, we will prove the sorting property for some related versions of binary trees in Section 5. Then we will give some counterexamples for some conjectures generalizing our property in Section \ref{sec:counterex}. In Section \ref{sec:poset}, we will study the poset of "endgame" moves, which will be defined in Section \ref{sec:labeled}. We will show that the poset is actually a modular lattice. Lastly, in Section \ref{sec:conclusion}, we conclude with questions for further research.
    
    \vspace{1em}
    
    \noindent \textbf{Acknowledgements:} The first author was partially supported by NSF grant DMS-1745638, and both authors thank M. Elkin for helpful proofreading, as well as S. Hopkins, C. Klivans, P. Liscio, J. Propp, and the referee for their valuable comments.
    
    \centering
    \begin{figure}[h!]
        \centering
        \includegraphics[width = \textwidth]{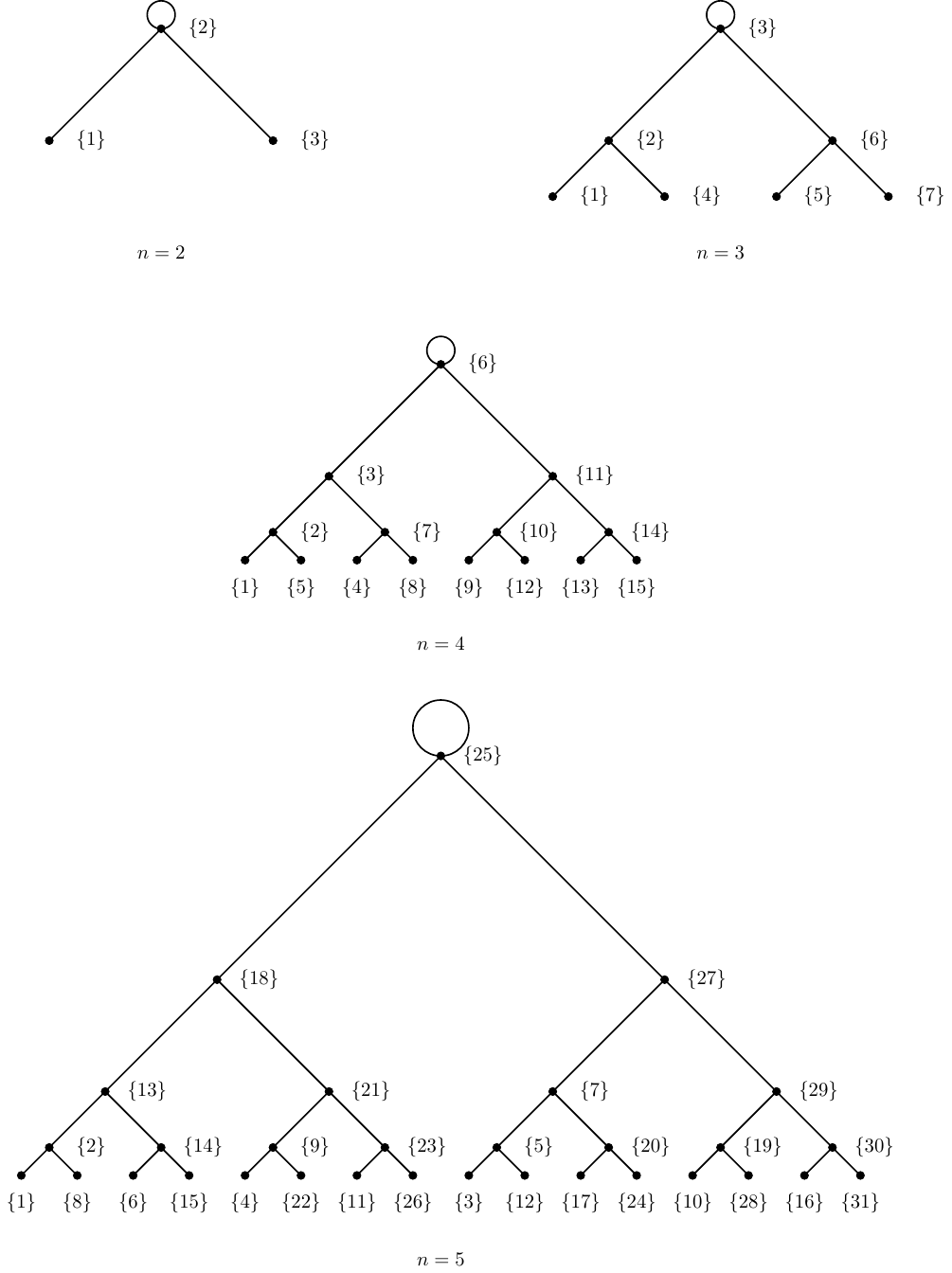}
        \caption{Examples of reachable terminal configurations}
        \label{Figure 1}
    \end{figure} 

	\justify
	\section{Definitions}\label{sec:def}
	
	\justify
	An infinite binary tree is a rooted tree in which every node has two children, which are referred to as the \textit{left child} and \textit{right child}. A node is also referred to as the \textit{parent} of its two children. We label the nodes in the binary tree starting from 1 at the root, going from parent to children and from left to right (see Figure \ref{Figure 2}). Notice that for every node $i$, its two children are labeled $2i$ and $2i+1$, and its parent is labeled $\left[\dfrac{i}{2}\right]$. We also add a self-loop to the root of the tree; thus, during the chip-firing process, every node can fire if and only if it has at least three chips.
    
    \begin{figure}[h!]
        \centering
        \includegraphics[width = \textwidth]{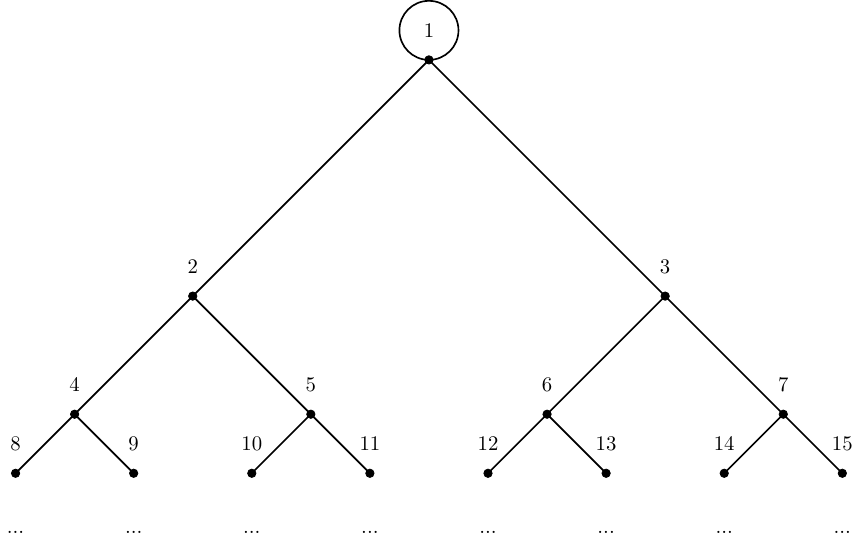}
        \caption{Binary tree and the labeling of the nodes}
        \label{Figure 2}
    \end{figure}

    In the context of an infinite binary tree, we define the rule for labeled chip-firing as follows. We start with $2^n-1$ chips labeled $1$ to $2^n-1$ at the root.
    When a node has at least three chips, it can choose any three chips and fire as follows: it sends the smallest chip to its left child, the largest chip to its right child, and the middle chip to its parent. If the root fires, it sends the middle chip to itself (via the self-loop). The process terminates (or stabilizes) when no node can fire, and we call the configuration when the process terminates the \textit{terminal configuration} or the \textit{stable configuration}.
	
	Now we introduce some more definitions that will be used in our paper. First, we define the \textit{level} of each node $i$, denoted \textit{level}$(i)$, to be dist$(i,1)+1$ where dist$(i,1)$ is the number of edges between node $i$ and node $1$. We also define the \textit{height} of a binary tree to be the maximum level among all of its nodes. For each node $i$, we define the \textit{subtree} of $i$, denote $(i)$, to be the binary tree whose root is $i$. We refer to the nodes in $(i)$ as its \textit{descendants}, and we also refer to $i$ as an \textit{ancestor} of its descendants. We call the nodes $2^ki$, for $k>0$, the \textit{straight left descendants} of $i$, and similarly, we call the nodes $2^{k}(i+1)-1$ its \textit{straight right descendants}. Specifically, we call a node $j$ the \textit{bottom straight left descendant} of node $i$ if, in the terminal configuration, $j$ is the straight left descendant with the bottom-most level that contains some chips. Similarly, we call a node $j$ the \textit{bottom straight right descendant} of node $i$ if, in the terminal configuration, $j$ is the straight right descendant in the bottom-most level that contains some chips. It is worth noting here that the definition of bottom straight descendants depends on the terminal configuration of the process; thus, a node may be a bottom straight descendant if the process starts with some certain number of chips, but not in other cases. We will see this more clearly in the next section. Conversely, we call node $i$ a \textit{straight ancestor} of node $j$ if node $j$ is a straight left or right descendant of node $i$. Lastly, we call node $i$ the \textit{top straight ancestor} of node $j$ if node $i$ is a left (right) child of its parent and node $j$ is a straight right (left) descendant of node $i$. Specifically, the root can be considered as both the top straight ancestor of both its straight left and right descendants.
	
	For example, in Figure \ref{Figure 2}, nodes $2,4,8$ are straight left descendants of node $1$ and nodes $3,7,15$ are straight right descendants. If in the terminal configuration, only the nodes on the first four levels contain some chips, then node $8$ and $15$ are the bottom straight left and right descendants of node $1$. Conversely, node $2$ is the top straight ancestor of nodes $5$ and $11$ but not nodes $4$ and $8$.
 	
    \justify
    \section{Unlabeled Chip-firing}\label{sec:unlabeled}

    Let us first present a brief review of classical unlabeled chip-firing. Let $G=(V,E)$ be a undirected graph and assign a nonnegative integer $C(v)$ to each vertex $v$ in $G$. We say that vertex $v$ has $C(v)$ chips and call $C$ a \textit{chip configuration}. If a vertex $v$ has at least as many chips on it as its degree, i.e. $C(v)\geq \deg{v}$, we say that $v$ is \textit{ready to fire}. When a vertex $v$ fires, it sends chips to its neighbors by sending one chip along each of its incident edges. A chip configuration is \textit{stable} if no vertex is ready to fire, i.e. $C(v) < \deg{v}$ for all $v\in V(G)$. We have the following fundamental result about the chip-firing game.

    \begin{theorem}[\cite{bjorner1991chip}]
        Let $N$ be the total number of chips, i.e. $N = \sum_{v\in V(G)}C(v)$.
        \begin{itemize}
            \item a) If $N > 2|E| - |V|$ then the game is infinite.
            \item b) If $N < |E|$ then the configuration reduces to a unique stable configuration.
            \item c) If $|E|\leq N\leq 2|E|-|V|$ then there exists some initial chip configuration which leads to an infinite process, and some other initial configuration which terminates in finite time.
        \end{itemize}
    \end{theorem}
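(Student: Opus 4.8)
The plan is to treat the three parts separately, with part (a) essentially free, part (b) carrying all of the content, and part (c) reducing to explicit examples. For part (a): in any stable configuration $C$ one has $C(v)\le\deg(v)-1$ for every vertex, hence $N=\sum_v C(v)\le\sum_v(\deg(v)-1)=2|E|-|V|$. Since a firing preserves the total number of chips, if $N>2|E|-|V|$ then no configuration reachable from the initial one is stable; so at every reachable configuration some vertex is ready to fire, no legal game can terminate, and the game is infinite.

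For part (b) I would assume $G$ connected (otherwise argue componentwise) and suppose for contradiction that some legal game is infinite. Because chips are conserved, every reachable configuration is a map $V\to\mathbb{Z}_{\ge 0}$ of total $N$, and there are only finitely many such maps, so an infinite legal game must revisit a configuration and hence contains a closed legal game $c=c_0\to c_1\to\cdots\to c_T=c_0$ with $T\ge 1$. Writing $f_v$ for the number of times $v$ fires in this closed game, the net change at each vertex is zero, which says exactly $Lf=0$ for the graph Laplacian $L=D-A$; by connectedness $\ker L=\operatorname{span}(\mathbf 1)$, so $f=k\mathbf 1$ with $k\ge 1$, and in particular every vertex fires during the closed game. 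Now order the vertices $v^{(1)},\dots,v^{(|V|)}$ by the time of their first firing in the closed game. At the instant $v^{(j)}$ fires for the first time it holds at least $\deg(v^{(j)})$ chips, and up to that instant it has strictly only received chips, at least one from each neighbor $v^{(i)}$ with $i<j$ (which has already fired); hence $\deg(v^{(j)})\le c_0(v^{(j)})+\#\{i<j:\,v^{(i)}\sim v^{(j)}\}$. Splitting $\deg(v^{(j)})=\#\{i<j:\,v^{(i)}\sim v^{(j)}\}+\#\{i>j:\,v^{(i)}\sim v^{(j)}\}$ and summing over $j$ (each edge contributing once) yields $|E|\le\sum_j c_0(v^{(j)})=N$, contradicting $N<|E|$. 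Thus every legal game terminates, and the uniqueness of the terminal configuration is then exactly global confluence (Theorem \ref{thm:confluence}).

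For part (c) the range $[|E|,\,2|E|-|V|]$ is nonempty precisely when $|V|\le|E|$, so assume this. A terminating example with any prescribed total $N\in[0,2|E|-|V|]$ — in particular any $N$ in the stated range — is obtained by starting from the stable configuration $C(v)=\deg(v)-1$ and lowering the values on a suitable set of vertices until the total is $N$; decreasing chips preserves stability, so the (unique) game is finite. For an infinite example one uses that a connected graph with $|V|\le|E|$ is not a tree and so contains a cycle, and builds a configuration supported near that cycle which keeps chips in perpetual motion around it; the prototype is the $n$-cycle with values $2,0,1,\dots,1$, where a traveling pair circulates forever, and the general case is obtained by saturating the rest of the graph and tuning the total into $[|E|,2|E|-|V|]$ by shifting chips within the saturated part. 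Verifying this last construction for an arbitrary $G$ (choosing the supporting configuration so that one can restrict firing to the cycle) is the only fiddly point in (c).

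The main obstacle is part (b): finding the right invariant. The two ideas that make it go through are that conservation of chips forces a periodic game, whose Laplacian-kernel computation shows that every vertex fires within a period, and then a double count over the first-firing order converts ``every vertex fires'' into the sharp bound $N\ge|E|$. The care needed is all in the bookkeeping of that double count — using that a vertex has strictly only gained chips before its first firing, and that each edge is counted exactly once in the final summation. Parts (a) and (c) are routine by comparison.
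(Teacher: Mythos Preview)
The paper does not prove this theorem at all; it is quoted from Bj\"orner--Lov\'asz--Shor and used as a black box. So there is no paper proof to compare to, and your attempt stands or falls on its own.

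Part (a) is fine, and the outline of part (c) is standard (the cycle construction you describe is exactly how one builds the non-terminating example).

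The gap is in your double count for part (b). You order vertices by \emph{first} firing time in the closed game and then write ``it has strictly only received chips, at least one from each neighbor $v^{(i)}$ with $i<j$ \dots\ hence $\deg(v^{(j)})\le c_0(v^{(j)})+\#\{i<j:\,v^{(i)}\sim v^{(j)}\}$.'' That ``hence'' does not follow: knowing that $v^{(j)}$ received \emph{at least} one chip from each earlier neighbor gives only a \emph{lower} bound on the chips received, whereas you need an \emph{upper} bound to deduce the displayed inequality. In a closed game with $f=k\mathbf 1$ and $k\ge 2$, an earlier-firing neighbor may fire several times before $v^{(j)}$'s first firing, so the received count can strictly exceed the number of earlier neighbors, and the inequality you wrote is unjustified.

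The standard fix is to run the same bookkeeping with the \emph{last} firing times instead. Order the vertices so that $\lambda(v^{(1)})<\cdots<\lambda(v^{(n)})$ are the last firing times in the closed game. After $v^{(j)}$ fires for the last time it has at least $0$ chips and never fires again, but each later-firing neighbor $v^{(i)}$ with $i>j$ fires at least once more and sends $v^{(j)}$ a chip; since $c_T=c_0$, this gives $c_0(v^{(j)})\ge \#\{i>j:\,v^{(i)}\sim v^{(j)}\}$. Summing over $j$ now yields $N\ge |E|$ cleanly, and the rest of your argument (the Laplacian kernel forcing every vertex to fire, then global confluence for uniqueness) goes through unchanged. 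Alternatively, you could first argue that a minimal closed game has $k=1$, after which first and last firing coincide and your original count becomes valid; but that extra step is unnecessary once you switch to last-firing order.
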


    Consequently, since our setting is an infinite binary tree with a finite number of chips, we always have $N<|E|$, so our game always terminates. It is worth reminding the reader that in unlabeled chip-firing, Theorem \ref{thm:confluence} is very valuable.

    \begin{repthm}{1.1}[{\cite[Theorem 2.2.2]{klivans2018mathematics}}]
        Let $c$ be a configuration.
    
        \begin{enumerate}
            \item \textbf{Local Confluence} Suppose $c_1$ and $c_2$ are two configurations which are both reachable from $c$ after one firing. Then there exists a common configuration $d$ reachable from both $c_1$ and $c_2$ after a single firing.
            \item \textbf{Global Confluence} If a stable configuration $c_s$ is reachable from $c$ after a finite number of legal firing moves, then $c_s$ is the unique stable configuration reachable from $c$.
        \end{enumerate}        
    \end{repthm}

    Notably, local confluence implies not only global confluence but also the following corollary.

    \begin{corollary}[\cite{klivans2018mathematics}]\label{cor:fire-same-times} Given a configuration $c$ such that $c_s$ is a stable configuration reachable from $c$ after a finite number of legal firing moves, then although the order of legal firing moves is changeable via local confluence, the stabilizing sequence is unique as a multi-set in the following sense:
        \begin{itemize}
            \item The length of any stabilizing sequence is the same, and
            \item The number of times each site fires in any stabilizing sequence is the same.
        \end{itemize}
    \end{corollary}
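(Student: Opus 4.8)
The plan is to obtain this as a consequence of Local Confluence (part~1 of Theorem~\ref{thm:confluence}) together with Global Confluence and the fact, noted just above, that our process always terminates, by induction on the length of a stabilizing sequence. Concretely, I would prove the following statement $P(k)$, for every integer $k \ge 0$: \emph{if a configuration admits at least one stabilizing sequence of length at most $k$, then all of its stabilizing sequences have the same length and the same firing multiset}, where the \emph{firing multiset} of a sequence is the multiset of sites fired (so that its cardinality is the length of the sequence and its multiplicities record how often each site fires). The base case $P(0)$ is trivial: such a configuration is already stable, so the empty sequence is the only legal sequence from it.

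For the inductive step I would assume $P(k-1)$ and take a configuration $c'$ with a stabilizing sequence of length $\le k$; if its length is $\le k-1$ we are done by $P(k-1)$, so assume it has length exactly $k \ge 1$, fires $v$ first to reach $c'_1$, and then follows a stabilizing sequence $\sigma_1$ of length $k-1$ from $c'_1$ to the (by Global Confluence unique) stable configuration $s$. Let $\tau$ be any stabilizing sequence from $c'$; it is nonempty, fires some $w$ first to reach $c'_2$, then a stabilizing sequence $\tau_2$ from $c'_2$, and by Global Confluence it too ends at $s$. If $v = w$ then $c'_1 = c'_2$, so applying $P(k-1)$ at $c'_1$ (which admits $\sigma_1$ of length $k-1$) to the two sequences $\sigma_1$ and $\tau_2$, and then prepending the common first move, finishes this case. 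If $v \ne w$, Local Confluence supplies a configuration $d$ obtained from $c'_1$ by firing $w$ and from $c'_2$ by firing $v$; since the process terminates, $d$ has a stabilizing sequence $\mu$, and since $d$ and $s$ are both reachable from $c'_1$, Global Confluence forces $\mu$ to end at $s$. Then $(w,\mu)$ is a stabilizing sequence from $c'_1$, so $P(k-1)$ at $c'_1$ gives $|\mu| = k-2$ and equality of the firing multisets of $(w,\mu)$ and $\sigma_1$; hence $(v,\mu)$ is a stabilizing sequence of length $k-1$ from $c'_2$, and $P(k-1)$ at $c'_2$ applied to $(v,\mu)$ and $\tau_2$ gives $|\tau_2| = k-1$ and equality of their firing multisets. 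Restoring the respective first moves $w$ and $v$, both $\tau$ and the original sequence then have length $k$ and firing multiset equal to $\{v, w\}$ added to that of $\mu$, which establishes $P(k)$.

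The multiset bookkeeping at the end is routine. The single point that needs care --- and the only place the concrete structure of chip-firing enters, as opposed to an abstract confluent rewriting system --- is the assertion that the configuration $d$ produced by Local Confluence is reached from $c'_1$ by firing exactly $w$ and from $c'_2$ by firing exactly $v$; this holds because firing a site different from $v$ can only leave the number of chips at $v$ unchanged or increase it, so $v$ stays ready at $c'_2$ (and symmetrically $w$ stays ready at $c'_1$), and firing $v$ then $w$ and firing $w$ then $v$ add the same two firing vectors to $c'$. I expect the only real subtlety to be organizational: the induction hypothesis must be phrased as ``admits a stabilizing sequence of length $\le k$'' (not ``of length exactly $k$''), so that it can legitimately be reapplied at both $c'_1$ and $c'_2$, each of which inherits a stabilizing sequence strictly shorter than the one we began with.
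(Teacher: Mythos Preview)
Your argument is correct and is precisely the standard Newman-lemma (diamond) induction one would expect here. There is essentially nothing to compare it against: the paper does not prove Corollary~\ref{cor:fire-same-times} at all, but simply records it as a well-known consequence of local confluence (``Notably, local confluence implies not only global confluence but also the following corollary''), so your write-up supplies the details the paper omits.

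One small remark on scope. The corollary is phrased for an arbitrary configuration $c$ that admits \emph{some} finite stabilizing sequence, while your proof invokes ``the fact, noted just above, that our process always terminates'' to guarantee that the intermediate configuration $d$ has a stabilizing sequence $\mu$. In the paper's setting this is unproblematic, since termination is established globally from $N<|E|$. If you wanted the argument to stand in the generality of the corollary's hypothesis alone, you would instead note that $(w,\mu)$ need not be produced from scratch: since $c'_1$ already admits the length-$(k-1)$ stabilizing sequence $\sigma_1$, and $d$ is reached from $c'_1$ by a single legal fire, repeated use of Local Confluence along $\sigma_1$ (or simply the abelian property you already isolate in your last paragraph) yields a stabilizing sequence from $d$ without appealing to global termination. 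This is a cosmetic adjustment; the proof is sound as written for the paper's purposes.
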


    Hence, in order to find the length of any stabilizing sequence and the number of times each site fires, it suffices to study one specific sequence. There are other variants of the chip-firing game, including chip-firing on directed graph and chip-firing with a sink vertex. However, these variants are not quite relevant to this paper, so we refer the readers to \cite{bjorner1992chip,holroyd2008chip,klivans2018mathematics}.

    Now we are ready to investigate some properties of the terminal configurations and stabilizing sequences for our setting.
    
    \begin{proposition} \label{prop:height}
         If we start with $N$ chips at the root, where $2^n-1 \leq N \leq 2^{n+1} - 2$, in the terminal configuration, the nodes that contain some chips form a perfect binary tree with height $n$. Furthermore, every node on the same level has the same number of chips.
    \end{proposition}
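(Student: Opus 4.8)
The plan is to combine global confluence with a symmetry argument and a recursion on the number of chips $N$.

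The ``same number of chips on each level'' assertion I would handle first. By global confluence (Theorem~\ref{thm:confluence}), the configuration reached from the all-chips-at-the-root start is the \emph{unique} stable configuration reachable from it. That initial configuration is invariant under every automorphism of the rooted binary tree (the group generated by transposing the two subtrees hanging below any given node), and these automorphisms fix the root, hence respect the self-loop; therefore the terminal configuration is invariant under this group as well. Any two nodes on a common level lie in a single orbit, so they carry equally many chips. (The identical argument, together with Corollary~\ref{cor:fire-same-times}, shows the number of times a node fires depends only on its level.)

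For the shape of the occupied set I would prove, by strong induction on $N$, that the terminal configuration $\widehat{c}_N$ is given recursively as follows: if $N\le 2$ then $\widehat{c}_N$ places $N$ chips at the root and nothing else; and if $N\ge 3$, write $N=2M+\varepsilon$ with $\varepsilon\in\{1,2\}$, and let $\widehat{c}_N$ place $\varepsilon$ chips at the root together with one copy of $\widehat{c}_M$, shifted down by one level, on each of the two principal subtrees (this recursion is precisely the expansion of $N$ in base $2$ with digits in $\{1,2\}$, anticipating Corollary~\ref{cor:pattern}). To prove it I would exhibit a legal stabilizing sequence reaching this configuration and then invoke global confluence. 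The sequence has two phases. In Phase~1 I fire the root $M$ times, which is legal and maximal (before the $M$-th firing it holds $\varepsilon+2\ge 3$ chips, and afterwards it holds $N-2M=\varepsilon\le 2$), depositing $M$ chips at each child. In Phase~2 I run the $M$-chip process on the two subtrees in mirror-image lockstep, but each time the two children fire in sync I insert one extra firing of the root immediately afterward. That extra root-firing returns one chip to each child and sends one chip to the root along the self-loop, so the combined ``macro-move'' reproduces exactly the effect of the two children possessing their own self-loops, while leaving the root's count at $\varepsilon$; every deeper node sees precisely the inputs it would see in a genuine $M$-chip process. The inserted root-firings are legal because the root momentarily holds $\varepsilon+2\ge 3$ chips, so Phase~2 is a legal sequence; by the inductive hypothesis it terminates with each subtree carrying $\widehat{c}_M$, and the resulting configuration $\widehat{c}_N$ is stable, hence is the terminal configuration. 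The base case $N\le 2$ is immediate, since then $M=0$ and the root cannot fire.

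It then remains to read off the statement. If $2^n-1\le N\le 2^{n+1}-2$ then $2^{n-1}-1\le M\le 2^n-2$, so by induction $\widehat{c}_M$ is supported on exactly the first $n-1$ levels with every node occupied; adjoining the root, which carries $\varepsilon\ge 1$ chips, shows $\widehat{c}_N$ is supported on exactly the first $n$ levels with every node occupied --- a perfect binary tree of height $n$ --- and the per-level statement was already established. The step I expect to demand the most care is Phase~2: one must verify carefully that interleaving the two mirrored subtree processes with the compensating root-firings is a genuinely \emph{legal} firing sequence, that it faithfully simulates the $M$-chip process on each subtree (so that the inductive hypothesis applies to the correct object), and that the root's chip count is maintained throughout. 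The remaining ingredients --- the symmetry argument, the base case, and deducing the ``perfect binary tree of height $n$'' conclusion from the recursion together with the elementary bound $2^{n-1}-1\le M\le 2^n-2$ --- are routine.
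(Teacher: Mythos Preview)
Your proposal is correct and follows essentially the same approach as the paper: both arguments exhibit the specific firing sequence ``exhaust the root, then run the two subtrees in lockstep, inserting a compensating root-firing after each synchronized pair of child-firings to simulate self-loops,'' and both invoke global confluence to conclude. Your organization differs only cosmetically---you induct on $N$ and extract the explicit recursion $\widehat{c}_N=(\varepsilon;\widehat{c}_M,\widehat{c}_M)$ (which indeed anticipates Corollary~\ref{cor:pattern}), whereas the paper inducts directly on $n$---and you handle the per-level uniformity via automorphism invariance rather than as a byproduct of the lockstep construction, which is a clean alternative but not a substantively different route.
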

    
    \begin{proof}
        We will prove the proposition by induction on $n$. In case $n=1$, it is clear that if we start with $1$ or $2$ chips at the root, the configuration is already stable, so the only node that contains some chips is the root, which forms a perfect binary tree with height $1$.
    
        Suppose the proposition is true for $n=k$, we will prove that the proposition is true for $n=k+1$. If we start with $2^{k+1}-1$ to $2^{k+2}-2$ chips at the root, we will fire as follows:
    
    \begin{enumerate}
        \item Fire the root repeatedly until it is not ready to fire anymore.
        \item Alternately fire the left and right branches of the root until they stabilize: after firing a node in the left branch, fire the corresponding node in the right branch.
        \item Whenever the two children of the root fire, fire the root.
    \end{enumerate}
    
        After step $1$, each child of the root will contain the same number of chips. Since every time the root fires, it sends 1 chip to itself, the root always contains at least 1 chip. Therefore, after step $1$, each child of the root contains between $2^k-1$ and $2^{k+1}-2$ chips. Note that step 3 means that whenever the two children of the root fire, they each send 1 chip to the root. Thus, by firing the root immediately after that, the root returns 1 chip back to each child. Hence, when the children of the root fire, they basically send two chips to their children and one chip to themselves. Therefore, the firing process of each branch is similar to firing with $2^k-1$ to $2^{k+1}-2$ initially placed at the root. By the induction hypothesis, we know that after the process stabilizes, in each branch, the nodes that contain some chips form a perfect binary tree with height $n-1 = k$, and every node on the same level has the same number of chips. Since step 2 states that the firing process in both branches is exactly the same, the terminal configuration of the two branches is exactly the same. Thus, after the process stabilizes, the nodes that contain some chips in the whole tree form a perfect binary tree with height $n$, and every node on the same level has the same number of chips. Since we are considering unlabeled chip-firing, the global confluence property applies, so the terminal configuration is always as above. This completes the proof.
    
    \end{proof}
    
    As mentioned in Section 2, the definition of bottom straight descendants depends on the terminal configuration of the process. With Proposition \ref{prop:height}, we now know that if the process starts with $2^n-1$ to $2^{n+1}-2$ chips at the root, the only bottom descendants are the nodes on level $n$, i.e. while some nodes on level $n$ will have a chip, no nodes on level $N$ for $N\geq n+1$ will have chips.
    
    Furthermore, the pattern of which levels have nodes with one chip versus two chips in the terminal configuration is easily determined.
    
    \begin{corollary} \label{cor:pattern}
    If we start with $N$ chips at the root, where $2^n-1 \leq N \leq 2^{n+1} - 2$, then for $0 \leq i \leq n-1$, the resulting terminal configuration has $a_i+1$ chips on each node on level $(i+1)$ where $a_n a_{n-1} \cdots a_2 a_1 a_0$ (with $a_n=1$) is the binary expansion of the number $N+1$.
    \end{corollary}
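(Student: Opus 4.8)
The plan is to derive this purely from Proposition~\ref{prop:height} together with two conservation facts: chip-firing never changes the total number of chips, and in a stable configuration every node carries at most two chips (the firing threshold is three at every node, including the root thanks to its self-loop).

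First I would fix notation from Proposition~\ref{prop:height}: in the terminal configuration the nodes holding chips are exactly the $2^n-1$ nodes on levels $1,\dots,n$, and all nodes on a given level hold the same number of chips; write $c_i$ for the common number of chips on a node of level $i+1$, for $0\le i\le n-1$. Each such node holds at least one chip (it belongs to the perfect binary tree of height $n$) and at most two (stability), so $c_i=d_i+1$ with $d_i\in\{0,1\}$.

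Then I would count the chips in two ways. Level $i+1$ contains $2^i$ nodes, so $\sum_{i=0}^{n-1}2^i c_i=N$, hence $\sum_{i=0}^{n-1}2^i d_i=N-(2^n-1)=(N+1)-2^n$. Since $2^n\le N+1\le 2^{n+1}-1$, the binary expansion of $N+1$ has the form $a_n a_{n-1}\cdots a_0$ with $a_n=1$, so $(N+1)-2^n=\sum_{i=0}^{n-1}2^i a_i$. Comparing the two expressions and using uniqueness of binary representations, $d_i=a_i$ for every $i$, i.e.\ $c_i=a_i+1$, which is exactly the claim.

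I do not expect a genuine obstacle once Proposition~\ref{prop:height} is available; the only things to watch are the index shift (level $i+1$ corresponds to bit $a_i$) and confirming that the hypothesis $2^n-1\le N\le 2^{n+1}-2$ is precisely what makes $N+1$ an $(n+1)$-bit number with leading bit $1$. If one preferred to avoid quoting Proposition~\ref{prop:height} in full, the same result follows by induction on $n$ along the firing strategy used to prove that proposition: firing the root until it stalls leaves $t=\lfloor(N-1)/2\rfloor$ chips on each child, with $t+1=\lfloor(N+1)/2\rfloor$ (so the binary expansion of $t+1$ is that of $N+1$ with its last bit deleted), and each child's subtree then evolves exactly like the original process with $t$ chips after synchronizing the root's fires; applying the inductive hypothesis to the subtrees shifts the bits into place. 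The direct counting argument is shorter, so that is the one I would write up.
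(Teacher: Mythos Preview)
Your counting argument is correct and is precisely the sort of deduction the paper has in mind: the paper does not write out a proof of Corollary~\ref{cor:pattern} at all, merely remarking that the pattern ``is easily determined'' from Proposition~\ref{prop:height}. Your argument---each level carries a common value $c_i\in\{1,2\}$ by Proposition~\ref{prop:height} and stability, and then $\sum_i 2^i(c_i-1)=(N+1)-2^n$ forces $c_i-1=a_i$ by uniqueness of binary expansions---is the natural way to fill that gap, and your inductive alternative (shifting off the low bit via the root-stalling step of the proof of Proposition~\ref{prop:height}) is also sound but, as you note, longer.
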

    
    We will now focus on the case in which we start with $2^n-1$ chips at the root as this is the case we consider in our main theorem. First of all, we have the following special case of Corollary \ref{cor:pattern}.
    
    \begin{corollary} \label{cor:one-chip}
        If we start with $2^n-1$ chips at the root, after the process stabilizes, every node in the first $n$ levels contains exactly 1 chip, and no other node contains any chip.
    \end{corollary}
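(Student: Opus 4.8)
The plan is to obtain this as an immediate specialization of the two preceding results, Proposition~\ref{prop:height} and Corollary~\ref{cor:pattern}, with $N = 2^n - 1$. Since $2^n - 1$ lies in the range $[2^n-1,\, 2^{n+1}-2]$, Proposition~\ref{prop:height} applies directly: in the terminal configuration the nodes carrying chips form a perfect binary tree of height $n$. In particular no node on level $n+1$ or deeper carries a chip, which already establishes the ``no other node contains any chip'' half of the statement.

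For the remaining half I would compute the relevant binary expansion. Taking $N = 2^n - 1$ gives $N+1 = 2^n$, whose binary expansion $a_n a_{n-1} \cdots a_1 a_0$ has $a_n = 1$ and $a_i = 0$ for every $0 \le i \le n-1$. Feeding this into Corollary~\ref{cor:pattern}, each node on level $i+1$ carries $a_i + 1 = 1$ chip for all $0 \le i \le n-1$; that is, every node on levels $1$ through $n$ carries exactly one chip. Together with the previous paragraph this is precisely the assertion.

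If one prefers an argument that does not route through Corollary~\ref{cor:pattern}, I would instead count. By Proposition~\ref{prop:height} the support of the terminal configuration is a perfect binary tree of height $n$, which has exactly $2^n - 1$ nodes, and each node of the support carries at least one chip by definition of the support. Chip-firing conserves chips, so the terminal configuration contains $2^n - 1$ chips in all, whence $2^n - 1 = \sum_{v \in \mathrm{supp}} C(v) \ge |\mathrm{supp}| = 2^n - 1$; equality forces $C(v) = 1$ at every node of the support.

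I do not expect a genuine obstacle here: all of the combinatorial work has already been done in Proposition~\ref{prop:height} and Corollary~\ref{cor:pattern} (the latter being where the binary-carry bookkeeping of repeated root firings enters). The only points requiring care are keeping the indexing straight --- ``level $i+1$'' corresponding to bit $a_i$ --- and noting that the leading bit $a_n = 1$ of $2^n$ is deliberately excluded from the range in Corollary~\ref{cor:pattern}, consistently with the height bound of Proposition~\ref{prop:height}.
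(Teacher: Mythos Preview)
Your proposal is correct, and your alternative counting argument is exactly the paper's own proof: Proposition~\ref{prop:height} forces the support to be a perfect binary tree of height $n$, hence $2^n-1$ nodes, and with $2^n-1$ chips conserved each node must carry exactly one. Your first route via the binary expansion in Corollary~\ref{cor:pattern} is also valid but is not the argument the paper uses here.
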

    
    \begin{proof}
        Since we start with $2^n-1$ chips, we know that the nodes that contain some chips form a perfect binary tree with height $n$. Thus, there are $2^n-1$ nodes that contain some chips, so each must contain exactly 1 chip.
    \end{proof}
    
    Another question that arises naturally in the discussion of chip-firing is how many times each node fires during the firing process. It turns out that in case we start with $2^n-1$ chips at the root, the numbers of times each node fires are Eulerian numbers. Recall that Eulerian number $\bracenom{n}{k}$ is the number of permutations in $S_n$ with $k$ descents. A comprehensive study of Eulerian numbers can be found in Petersen's book (\cite{petersen2015eulerian}), but we will point out that the formula for Eulerian numbers is
    \[ \bracenom{n}{k}=\sum_{i=0}^{k}(-1)^i(k+1-i)^n\binom{n+1}{i}. \]
    Specifically, if $k=1$, we have the following formula for the number of Grassmannian permutations, which are permutations with exactly one descent,
    \[ \bracenom{n}{1}=2^n-(n+1). \]
    Thus, we have the following recursive formula
    \[ \bracenom{n+1}{1}=2^n-1+\bracenom{n}{1}. \]
    Now we will prove that in case we start with $2^n-1$ chips at the root, the numbers of times each node fires are the Eulerian numbers $\bracenom{k}{1}$.
    
    \begin{proposition} \label{prop:level-fire}
        If we start with $2^n-1$ chips at the root, during the firing process, for every $0\leq i<n$, every node on level $n-i$ fires $\bracenom{i+1}{1}$ times.
    \end{proposition}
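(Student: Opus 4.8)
The plan is to induct on $n$, invoking Corollary \ref{cor:fire-same-times} so that it suffices to count firings along the single explicit stabilizing sequence from the proof of Proposition \ref{prop:height}, together with the recursion $\bracenom{n+1}{1}=2^n-1+\bracenom{n}{1}$ recorded just before the statement. The base case $n=1$ is immediate: with a single chip at the root nobody fires, and indeed $\bracenom{1}{1}=2^1-(1+1)=0$, matching the only relevant value $i=0$, level $n-i=1$.

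For the inductive step, suppose the claim holds for $n=k$ and start with $2^{k+1}-1$ chips. I would run the three-step procedure of Proposition \ref{prop:height}. In Step 1 the root fires repeatedly; each root firing sends two chips to its children and keeps one, so the root's count decreases by exactly $2$ each time. Since $2^{k+1}-1$ is odd, the root halts with exactly one chip, having fired $(2^{k+1}-2)/2=2^k-1$ times, and each child of the root is then left holding $2^k-1$ chips. In Steps 2--3 the two branches are fired in lockstep, and the rule ``re-fire the root immediately whenever both children of the root fire'' makes the root hand back the chip it just received; consequently each branch evolves exactly as a stand-alone labeled chip-firing process on an infinite binary tree started with $2^k-1$ chips at its own root (the transient chip at the root plays the role of the self-loop).

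Now I would apply the induction hypothesis to each branch, regarded as a tree of its own. A node at level $\ell\ge 2$ of the original tree sits at level $\ell-1$ in its branch, and the branch has ``height parameter'' $k$, so that node fires $\bracenom{k-\ell+2}{1}$ times; since in the original tree this node is at level $(k+1)-i$ with $i=k+1-\ell$, this is exactly $\bracenom{i+1}{1}$, as required. For the root itself, I would count: $2^k-1$ firings in Step 1, plus one extra firing for every matched pair of firings of its two children in Steps 2--3. Because the two branches are fired identically, each child fires $\bracenom{k}{1}$ times (the $\ell=2$ case above), so the root gains exactly $\bracenom{k}{1}$ further firings, for a total of $2^k-1+\bracenom{k}{1}=\bracenom{k+1}{1}$; and the root is at level $1=(k+1)-k$, i.e.\ the case $i=k$. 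This closes the induction.

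The main obstacle I anticipate is not the arithmetic but the bookkeeping in the inductive step: one must argue rigorously that the branch sub-process is genuinely equivalent to the independent $(2^k-1)$-chip process (so the hypothesis applies verbatim after re-indexing levels), and that root re-firings in Steps 2--3 are in exact bijection with the paired child-firings, so the extra root count is precisely $\bracenom{k}{1}$ rather than something off by a boundary term. Once that equivalence is nailed down, the level re-indexing $\ell \leftrightarrow i$ and the recursion $\bracenom{k+1}{1}=2^k-1+\bracenom{k}{1}$ finish the proof mechanically.
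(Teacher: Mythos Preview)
Your proposal is correct and follows essentially the same approach as the paper's proof: induction on $n$ using the explicit stabilizing sequence from Proposition~\ref{prop:height}, the induction hypothesis on each branch, and the recursion $\bracenom{k+1}{1}=2^k-1+\bracenom{k}{1}$ for the root count. You supply more detail than the paper (the explicit count of Step~1 root firings, the level re-indexing $\ell\leftrightarrow i$, and the branch--subtree equivalence), but the structure is identical.
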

    
    \begin{proof}
        We will also prove the proposition by induction on $n$. In the case $n=1$, the root does not fire, and since $\bracenom{1}{1}=0$, the proposition is true for $n=1$.
    
        Suppose the proposition is true for $n=k$, we will prove that the proposition is true for $n=k+1$. We will consider the same firing process as in the proof of Proposition \ref{prop:height}. Note that a property of  chip-firing is the number of times each node fires is the same regardless of the order we fire. The firing process of the left and right subtrees are exactly the same as in the case $n=k$. Thus, by the induction hypothesis, we know that for every $0<i<k$ every node on level $k+1-i$ fires $\bracenom{i+1}{1}$ times. Additionally, the root fires $2^k-1$ times in step 1, and $\bracenom{k}{1}$ more times in step 3 (since its children fire $\bracenom{k}{1}$ times each). Thus, the root fires $2^k-1+\bracenom{k}{1}=\bracenom{k+1}{1}$ times. This completes the proof.
    \end{proof}
    
    This leads to the following corollary on the total number of firing moves during the firing process.
    
    \begin{corollary} \label{cor:num-fire}
        If we start with $2^n-1$ chips at the root, during the firing process, the total number of firing moves is $2^{n}(n-3)+n+3$.
    \end{corollary}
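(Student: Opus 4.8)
The plan is to add up the per-node firing counts supplied by Proposition~\ref{prop:level-fire}. Level $\ell$ of the tree contains $2^{\ell-1}$ nodes, and by Proposition~\ref{prop:level-fire} (applied with $i=n-\ell$) each such node fires $\bracenom{n-\ell+1}{1}$ times, so the total number of firing moves is
\[
T_n \;=\; \sum_{\ell=1}^{n} 2^{\ell-1}\,\bracenom{n-\ell+1}{1}
\;=\; \sum_{i=0}^{n-1} 2^{\,n-i-1}\,\bracenom{i+1}{1},
\]
where in the last step I substituted $i=n-\ell$. Next I would insert the closed form $\bracenom{m}{1}=2^{m}-(m+1)$ recalled just before Proposition~\ref{prop:level-fire}, which rewrites the summand $2^{\,n-i-1}\bracenom{i+1}{1}$ as $2^{n}-(i+2)\,2^{\,n-i-1}$.

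What then remains is a routine evaluation: $T_n = n\,2^{n}-\sum_{i=0}^{n-1}(i+2)\,2^{\,n-i-1}$, and after reindexing the remaining sum by $m=n-1-i$ it becomes $(n+1)\sum_{m=0}^{n-1}2^{m}-\sum_{m=0}^{n-1}m\,2^{m}$. Using the elementary identities $\sum_{m=0}^{n-1}2^{m}=2^{n}-1$ and $\sum_{m=0}^{n-1}m\,2^{m}=(n-2)2^{n}+2$ (the latter by the standard "differentiate the geometric series" trick, or by a one-line induction), this collapses to $3\cdot 2^{n}-n-3$, and hence $T_n = n\,2^{n}-\bigl(3\cdot 2^{n}-n-3\bigr)=2^{n}(n-3)+n+3$, as claimed. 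There is no genuine obstacle; the only thing to watch is the bookkeeping in the two reindexings and the signs.

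Alternatively, one could bypass the closed forms and argue by induction on $n$ using the explicit stabilizing strategy from the proof of Proposition~\ref{prop:height}. There the root fires $2^{n-1}-1$ times during step~1 and $\bracenom{n-1}{1}$ additional times during step~3, while steps~2--3 force each of the two branches to evolve exactly like the process started with $2^{n-1}-1$ chips; since Corollary~\ref{cor:fire-same-times} guarantees the firing count is independent of the order of moves, this yields the recursion $T_n = 2T_{n-1}+2^{n-1}-1+\bracenom{n-1}{1}$ with $T_1=0$. Substituting the inductive formula for $T_{n-1}$ together with $\bracenom{n-1}{1}=2^{n-1}-n$ and simplifying then closes the induction. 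Either route works; I would present the direct summation as the main argument since it is the shortest.
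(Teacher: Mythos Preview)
Your main argument is correct and is essentially the same as the paper's own proof: both start from the per-level count $\sum_{i=0}^{n-1} 2^{\,n-i-1}\bracenom{i+1}{1}$, plug in $\bracenom{i+1}{1}=2^{i+1}-(i+2)$, and reduce the resulting sum by elementary manipulations. The only difference is cosmetic bookkeeping: the paper splits off $\sum i\,2^{\,n-i-1}$ and evaluates it via a double sum, whereas you reindex to $\sum m\,2^{m}$ and quote the standard closed form $(n-2)2^{n}+2$; either way is fine.

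Your alternative inductive route via the recursion $T_n = 2T_{n-1} + 2^{n-1} - 1 + \bracenom{n-1}{1}$ is also correct and is not in the paper; it is a pleasant variant since it reuses the stabilizing strategy already set up for Propositions~\ref{prop:height} and~\ref{prop:level-fire} rather than introducing any new summation identities.
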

    
    The proof of this corollary is straightforward, but we will still verify the calculation here anyway.
    
    \begin{proof}
        Since on level $n-i$ there are $2^{n-i-1}$ nodes, each fires $\bracenom{i+1}{1}=2^{i+1}-(i+2)$ times during the process, the total number of firing moves is
        
        \begin{align*}
            \sum_{i=0}^{n-1}2^{n-i-1}(2^{i+1}-(i+2)) &= \sum_{i=0}^{n-1}(2^n-i2^{n-i-1}-2^{n-i}) \\
            &= n2^n - \sum_{i=0}^{n-1}i2^{n-i-1} - \sum_{i=0}^{n-1}2^{n-i} \\
            &= n2^n-\sum_{k=0}^{n-2}\sum_{i=0}^{k}2^i-(2^{n+1}-2) \\
            &= n2^n-\sum_{k=0}^{n-2}(2^{k+1}-1)-(2^{n+1}-2) \\
            &= n2^n-[2^n-2-(n-1)]-(2^{n+1}-2) \\
            &= 2^n(n-3)+n+3.
        \end{align*}
    \end{proof}
	
    \justify
    \section{Proof of our Main Theorem}\label{sec:labeled}
    
    In this section, we will consider the labeled chip-firing process with $2^n-1$ chips (labeled from 1 to $2^n-1$) initially placed at the root of an infinite binary tree.  Recall our definitions from Section \ref{sec:def}. Let $(i,j)$ be the $j$th-to-last fire of node $i$, we will adopt the relation used by Klivans and Liscio in \cite{klivans2022confluence} and define the relation $(i_1,j_1) > (i_2,j_2)$ if $(i_1,j_1)$ must occur before $(i_2,j_2)$, thereby giving a partial order. Note that $j$ is indexed from 0, which means $(i,0)$ is the last firing move of node $i$. The relation may seem counter-intuitive at first, but later in this section and in Section \ref{sec:poset}, we will prove some interesting results concerning this poset. Particularly, most firing moves at the beginning of the game may occur in any order. Thus, we want to restrict our attention to the bottom of the poset which consists of the "endgame" moves defined as follows.

    \begin{definition}\label{def:endgame-moves}
        We call the moves $(i,j)$ where level$(i)+j<n$ the \textit{endgame moves}. We call the poset formed by these endgame moves with the partial order $(i_1,j_1) > (i_2,j_2)$ if $(i_1,j_1)$ must occur before $(i_2,j_2)$ the \textit{endgame moves poset}.
    \end{definition}
    
    We will later prove that these moves have to occur in some specific order, and the endgame moves posets are symmetric modular lattices. Figure \ref{Figure 11} shows examples of the endgame moves posets for small cases of $n$. First, we will prove that these endgame moves have to occur in some specific order.
    
    \begin{proposition} \label{prop:fire-order}
        Consider a node $i$ with level$(i) = k>1$. Then for all $j$ such that $k+j<n$:
        
        \begin{itemize}
            \item $\left(\left[\dfrac{i}{2}\right],j\right)<\left( i,j\right)<\left(\left[\dfrac{i}{2}\right],j+1\right)$,
            \item $(i,j)$ occurs when node $i$ has 3 chips.
        \end{itemize}
    \end{proposition}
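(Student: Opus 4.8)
The plan is to re-express everything in terms of, for each node $v$, the number $r_v$ of firings of $v$ that still remain. By Proposition \ref{prop:level-fire} a node at level $\ell$ fires exactly $F_\ell:=\bracenom{n-\ell+1}{1}$ times no matter how we run the process, and the recursion $\bracenom{m+1}{1}=2^m-1+\bracenom{m}{1}$ gives the identity $F_{\ell-1}+2F_{\ell+1}=3F_\ell+1$. Since each firing of the parent and each firing of a child drops one chip onto $v$ while each firing of $v$ removes three, accounting for chips shows that at \emph{every} moment a non-root node $i$ at level $k$ holds exactly $1+3r_i-r_{[i/2]}-r_{2i}-r_{2i+1}$ chips, and the root holds $1+2r_1-r_2-r_3$ chips. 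Hence the move $(i,j)$ (the firing of $i$ that sends $r_i$ from $j+1$ to $j$) is legal only when $r_{[i/2]}+r_{2i}+r_{2i+1}\le 3j+1$; it is legal with $i$ holding exactly $3$ chips precisely when equality holds; and the double relation $([i/2],j)<(i,j)<([i/2],j+1)$ says exactly that $r_{[i/2]}=j+1$ at the instant $(i,j)$ happens.

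So it is enough to prove the following sharper statement: for every node $v$ and every $j\ge0$ with $\mathrm{level}(v)+j<n$, at the moment of move $(v,j)$ one has $r_{2v}=r_{2v+1}=j$, together with $r_{[v/2]}=j+1$ when $v\ne 1$. Feeding $r_{[i/2]}=j+1$ and $r_{2i}=r_{2i+1}=j$ into the chip formula gives chip count $1+3(j+1)-(j+1)-2j=3$, which is the second bullet, and $r_{[i/2]}=j+1$ is the first; so the proposition follows.

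I would prove this sharper statement by induction on $n$, reusing the recursive firing analysis behind Proposition \ref{prop:height}: once the root has been fired down to one chip, the subtrees below $2$ and $3$ each behave as a height-$(n-1)$ instance with its own self-loop (the root immediately returns whatever a child sends up). Under the level shift $\ell\mapsto \ell-1$ this yields the sharper statement for all $v$ at level $\ge 3$ from the inductive hypothesis, and reduces the level-$2$ case (and the bookkeeping for the root) to the ``root'' instance of the statement one level down. The bottom of the induction — the root and the level-$2$ nodes — is handled by a downward induction on $j$, whose whole point is that a node cannot complete its last few firings too early: if, say, the parent of $v$ were already finished at move $(v,j)$, then propagating the inequality ``chip count $\ge 0$'' up the tree forces the root to have performed \emph{all} of its firings before being handed the chips its children must supply, contradicting $1+2r_1-r_2-r_3\ge 0$. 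A symmetric argument, using the nonnegativity of the chip counts at $v$'s children, stops a child from running ahead of schedule; together these pin $r_{[v/2]}$ and $r_{2v},r_{2v+1}$ to the claimed values.

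The hard part is precisely this last step. The proposition must hold for \emph{every} legal firing order, whereas Proposition \ref{prop:height}'s scheme is a single one, so the endgame order has to be shown rigid directly from the chip-count constraints. The subtlety is that an internal node's firings are gated simultaneously from above (it needs incoming chips from its parent) and from below (it must not overshoot its final chip count), so the endgame cannot be analysed one level at a time in isolation; it is the circular chip-supply dependence linking each node up the path to the root that both freezes the order and makes the bookkeeping delicate. Everything else — the chip identity, its three consequences, and the reduction to the sharper statement — is mechanical once $F_{\ell-1}+2F_{\ell+1}=3F_\ell+1$ is recorded.
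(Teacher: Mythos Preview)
Your chip-count identity $c_v = 1 + 3r_v - r_{[v/2]} - r_{2v} - r_{2v+1}$ (and $c_1 = 1 + 2r_1 - r_2 - r_3$) is correct and is a cleaner framework than the paper's case-by-case counting; the reduction to the ``sharper statement'' is also correct and equivalent to the proposition. But neither of the two mechanisms you propose for proving the sharper statement actually closes.

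The induction on $n$ does not apply to arbitrary firing orders. The subtree at node $2$ behaves like a height-$(n-1)$ instance with a self-loop \emph{only} under the specific schedule of Proposition~\ref{prop:height} (fire the root down first, then re-fire it immediately whenever a child feeds it). In a general legal order the root may feed node $2$ in bursts or starve it, and the induced sequence of firings inside the subtree need not be legal for the self-loop model, so the inductive hypothesis for height $n-1$ does not transfer. You concede this in your last paragraph, which leaves the induction on $n$ doing no work.

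The ``propagate nonnegativity up the tree'' argument stalls after one step. Suppose at move $(v,j)$ you had $r_p \le j$ for $p = [v/2]$; nonnegativity of $c_p$ gives $r_{[p/2]} + r_{\mathrm{sib}(v)} \le 3r_p + 1 - r_v \le 2j$, which bounds only the \emph{sum}. Nothing yet established bounds $r_{\mathrm{sib}(v)}$ from below, so you cannot extract $r_{[p/2]} \le j-1$ and iterate towards the root. The downward version (``a child running ahead of schedule'') has the symmetric defect: you control $r_{4v}+r_{4v+1}$ but not either summand.

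The paper breaks this circularity by exactly the double induction your sketch is missing: establish all $j=0$ relations by inducting on level from the root down (its cases 1--2), link $j=0$ to $j=1$ (case 3), then repeat the level sweep for each successive $j$ (cases 4--6). In your language each of the six cases is the one-line contradiction ``$(i,j)$ legal gives $r_{[i/2]}+r_{2i}+r_{2i+1}\le 3j+1$, while the relations already proven give the reverse inequality, so equality holds and all three summands are pinned''. Your identity would compress each case to that single line --- but you still have to run the double induction; neither of your proposed shortcuts replaces it.
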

    
    Before proving this proposition, here we present the outline of the proof: 
    
    \begin{enumerate}
        \item $(1,0)<(2,0)$ and $(3,0)$
        \item $(i,0)<(2i,0)$ with the assumption that $\left(\left[\dfrac{i}{2}\right],0\right)<(i,0)$; similarly, we have $(i,0)<(2i+1,0)$
        \item $(2i,0)<(i,1)$ and $(2i+1,0)<(i,1)$
        \item $(1,j)<(2,j)$ with the assumptions $(2,j-1)<(1,j)$ and $(3,j-1)<(1,j)$; similarly, $(1,j)<(3,j)$
        \item $(i,j)<(2i,j)$ with the assumptions $\left(\left[\dfrac{i}{2}\right],j\right)<(i,j)$, $(2i,j-1)<(i,j)$ and $(2i+1,j-1)<(i,j)$; similarly, $(i,j)<(2i+1,j)$
        \item $(i,j)<\left(\left[\dfrac{i}{2}\right],j+1\right)$ assuming both $(2i,j-1)<(i,j)$ and $(2i+1,j-1)<(i,j)$
    \end{enumerate}
    
    The idea of the proof is to do induction on both level$(i)$ and $j$. After steps 1 and 2, we have $(i,0)<(2i,0)$ for all nodes $i$. Then step 3 links $j=0$ to $j=1$ by giving $(2i,0)<(i,1)$ for all nodes $i$. This completes the "base" step, and the "inductive" step continues. First, step 4, analogous to step 1, gives $(1,1)<(2,1)$ and $(1,1)<(3,1)$ using $(2,0)<(1,1)$ and $(3,0)<(1,1)$ proved in step 3. Then step 5, analogous to step 2, completes the case $j=1$ by induction on $i$. Step 6, analogous to step 3, then links $j=1$ to $j=2$. Continuing by using step 4, 5 and 6 repeatedly, we can prove the proposition.
    
    \begin{proof}
        
        \begin{enumerate}

            \item Suppose $(1,0)$ can occur before $(2,0)$, then, after $(1,0)$, node 1 contains at least 1 chip. Also, it will receive at least 1 more chip from $(2,0)$, so it will have at least 2 chips at the end. This is a contradiction since we know at the end every node contains exactly 1 chip. Thus, $(1,0)<(2,0)$, and similarly, $(1,0)<(3,0)$. Also, $(1,0)$ must occur when node $1$ has $3$ chips since otherwise node $1$ will have more than 1 chip at the end.
            
            \item Recall from Section 2 that nodes $2i$ and $2i+1$ are the two children of node $i$. The proof for this case then follows identically to Case 1 above. In particular, suppose $(i,0)$ can occur before $(2i,0)$, then, after $(i,0)$, node $i$ will receive at least 1 chip from $(2i,0)$. It will also receive at least 1 chip from $\left(\left[\dfrac{i}{2}\right],0\right)$ by the assumption that $\left(\left[\dfrac{i}{2}\right],0\right)<(i,0)$. Hence, node $i$ will have at least 2 chips at the end, which is a contradiction. Thus, $(i,0)<(2i,0)$, and similarly, $(i,0)<(2i+1,0)$. Also, since after $(i,0)$, node $i$ will receive at least 1 chip from $\left(\left[\dfrac{i}{2}\right],0\right)$, after $(i,0)$, node $i$ must have no chips left. Therefore, $(i,0)$ must occur when node $i$ has 3 chips.
            
            \item Suppose $(2i,0)$ can occur before $(i,1)$, i.e. there is a node $i$ whose left child fires for the last time before $i$ fires for the second-to-last time. Then, after $(2i,0)$, node $2i$ will receive 2 more chips from node $i$, i.e. from both $(i,1)$ and $(i,0)$. Hence, node $2i$ will have at least $2$ chips at the end, which is a contradiction. Thus, $(2i,0)<(i,1)$, and similarly, $(2i+1,0)<(i,1)$.
            
            \item Suppose $(1,j)$ can occur before $(2,j)$, then, after $(1,j)$, node 1 will keep at least 1 chip and lose $2(j-1)$ more chips. However, since $(3,j-1)<(1,j)$, node 1 will receive at least $j-1$ more chips from node 3 and $j$ more chips from node 2. Hence, node 1 will receive at least $2j-1$ chips. Thus, node 1 will have at least 2 chips at the end, which is a contradiction. Thus, $(1,j)<(2,j)$, and similarly, $(1,j)<(3,j)$. Also, since $(2,j-1),(3,j-1)<(1,j)$, after $(1,j)$, node 1 will lose $2(j-1)$ chips and receive $2(j-1)$ chips. Therefore, $(1,j)$ must occur when node 1 has 3 chips.
            
            \item Suppose $(i,j)$ can occur before $(2i,j)$, then after $(i,j)$, node $i$ will lose $3(j-1)$ more chips. However, since $\left(\left[\dfrac{i}{2}\right],j\right)<(i,j)$, node $i$ will receive at least $j$ more chips from node $\left[\dfrac{i}{2}\right]$. Since $(2i+1,j-1)<(i,j)$, node $i$ will also receive at least $j-1$ more chips from node $2i+1$. Finally, since $(i,j)$ occur before $(2i,j)$, node $i$ will receive at least $j$ more chips from node $2i$. Hence, node $i$ will receive at least $3j-1$ chips, so node $i$ will have at least 2 chips at the end, which is a contradiction. Thus, $(i,j)<(2i,j)$, and similarly, $(i,j)<(2i+1,j)$. Also, since $(2i,j-1),(2i+1,j-1),\left(\left[\dfrac{i}{2}\right],j\right)<(i,j)$, after $(i,j)$, node $i$ will lose $3(j-1)$ chips and receive $3j-2$ chips, so $(i,j)$ must occur when node $i$ has 3 chips.
            
            \item Suppose $(i,j)$ can occur before $\left(\left[\dfrac{i}{2}\right],j+1\right)$, then after $(i,j)$, node $i$ will lose $3(j-1)$ more chips. However, since $(2i,j-1),(2i+1,j-1)<(i,j)$, node $i$ will receive at least $j-1$ more chips from node $2i$ and $2i+1$, and at least $j+1$ more chips from node $\left[\dfrac{i}{2}\right]$. Thus, node $i$ will receive at least $3j-1$ chips, so node $i$ will have at least 2 chips at the end, which is a contradiction. Therefore, $(i,j)<\left(\left[\dfrac{i}{2}\right],j+1\right)$.
            
        \end{enumerate}
        
    \end{proof}

    In general, in the endgame of the process, a node fires its $j$th to last time before its parent's $j$th to last fire but after its parent's $j+1$ to last fire. This is analogous to Lemma 2.6 in \cite{klivans2022confluence}. They showed that for labeled chip-firing on a line, the endgame moves satisfy the relation $(x,y) < (x+1,y)$ and $(x,y) < (x,y+1)$. This gives a nice grid structure on the endgame moves posets. For our case, however, since each node has three neighbors, we do not have such nice grid structure. We will show in Section \ref{sec:poset} that our endgame moves posets are modular lattices but are not distributive. Furthermore, they showed that for labeled chip-firing on a line, the endgame moves occur when the nodes have exactly 2 chips. In our case, the endgame moves occur when the nodes have exactly 3 chips.
            
    An important corollary of this firing relation is the following local confluence property in the endgame.
    
    \begin{corollary} \label{cor:confluence}
        \textbf{(Local confluence at the end)} Suppose $2^n-1$ labeled chips are initially placed at the root, node $1$.  Then, the root fires $\bracenom{n}{1} = 2^n - (n+1)$  times before the terminal configuration.  From the moment $(1,n-2)$ can occur, the order in which the nodes fire does not affect the terminal configuration.
    \end{corollary}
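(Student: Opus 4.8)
The statement splits into two claims. The first — that the root fires $\bracenom{n}{1} = 2^n-(n+1)$ times — is immediate from Proposition~\ref{prop:level-fire} with $i=n-1$, since level $n-(n-1)=1$ is the root and $\bracenom{(n-1)+1}{1}=\bracenom{n}{1}$. For the second claim my plan has three steps: (i) identify $(1,n-2)$ as the first of the ``endgame'' moves to occur; (ii) show that once it can occur the only moves remaining are the endgame moves, each a \emph{forced} firing of a node holding exactly three chips; (iii) deduce confluence from the poset of Proposition~\ref{prop:fire-order} together with the abelian nature of chip-firing.

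For (i), I would iterate the relation $(i,j)<([i/2],\,j+1)$ from Proposition~\ref{prop:fire-order} up the tree: for any endgame move $(i,j)$ (one with $\mathrm{level}(i)+j<n$) this produces a chain terminating at the root fire $(1,\,j+\mathrm{level}(i)-1)$, whose index is at most $n-2$; combined with the order among the root's own fires, this shows $(1,n-2)$ must occur before every other endgame move. Thus $(1,n-2)$ is the unique maximal element of the endgame poset, and at the moment it can occur no endgame move has yet happened.

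Step (ii) is the crux. At that moment the root has already fired $2^n-(n+1)-(n-1)=2^n-2n$ times, which is exactly its quota of non-endgame fires; by (i) no other node has made an endgame move, so every node has fired \emph{at most} its own non-endgame quota. I would then prove equality everywhere by a downward induction on levels using chip conservation and the odometer values from Proposition~\ref{prop:level-fire}: the root carrying at least three chips forces nodes $2$ and $3$ up to their quota, and — using that no node other than the root is yet able to fire, i.e. each deeper node carries at most two chips — the same bookkeeping propagates the equality to every level below. Consequently the moves still to be performed are exactly the endgame moves, and by the second bullet of Proposition~\ref{prop:fire-order} each one is a firing of a node with exactly three chips, hence is completely determined by which three chips are present. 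Making ``$(1,n-2)$ can occur'' precise enough to force the whole odometer down to the non-endgame quota (so that no node is left sitting on a large pile, able to make a non-deterministic move) is the main obstacle; everything else is routine once it is done.

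For (iii), any firing sequence that terminates the process must respect all the inequalities of Proposition~\ref{prop:fire-order}, since that proposition's proof uses only that each node ends with a single chip (Corollary~\ref{cor:one-chip}), which holds for every terminating run; hence every continuation from the moment $(1,n-2)$ can occur is a linear extension of the endgame poset. In this poset two incomparable moves must lie at non-adjacent tree nodes, because the first bullet of Proposition~\ref{prop:fire-order} totally interleaves the fires of a node with those of its parent, making moves at adjacent nodes comparable; and firings at non-adjacent nodes commute, even with labels attached, because a firing alters only the chip multisets of its own neighbours. Since any two linear extensions of a finite poset are connected by successive transpositions of adjacent incomparable elements, and each such transposition exchanges two commuting firings, all continuations yield the same terminal configuration, which is what the corollary asserts.
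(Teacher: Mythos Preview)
Your step (iii) is essentially the paper's entire proof. From Proposition~\ref{prop:fire-order} the paper observes that in the endgame a node and its parent (and likewise a node and its children) are never simultaneously fireable, since their endgame fires strictly interleave, and that each such fire involves exactly three chips; hence any two fireable nodes are non-adjacent and their forced fires commute, so order is irrelevant. Your packaging via linear extensions and transpositions of incomparable elements is an equivalent and arguably cleaner way to say the same thing.

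Where you go beyond the paper is in steps (i) and (ii). The paper does not explicitly identify $(1,n-2)$ as the top of the endgame poset, nor does it argue that all non-endgame fires are complete by the time $(1,n-2)$ becomes available --- it simply takes this for granted and jumps straight to the commutation argument. You are right to flag (ii) as the real content: without it one cannot exclude a deeper node still sitting on four or more chips and making a non-deterministic fire after $(1,n-2)$, which would break the argument. Your odometer/chip-conservation sketch is the correct approach and can indeed be pushed through (the exact-three-chips condition at the root forces nodes $2$ and $3$ to have each met their non-endgame quota, and since a node at level $k$ can have received at most as many chips from below as its children have fired, this propagates down level by level). So your proposal agrees with the paper on the confluence mechanism, and in addition correctly isolates a step the paper leaves implicit.
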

    
    \begin{proof}
        In the end game of the process, a node fires its $j$th to last time before its parent's $j$th to last fire but after its parent's $j+1$ to last fire. Thus, if a node can fire, none of its neighbors can fire (i.e. neither its parent or its two children). Also, each fire occurs when the node has exactly 3 chips. Thus, when a node can fire, it will fire exactly the 3 chips that it contains, and since none of its neighbors can fire, none of its neighbors can affect its firing move. Therefore, the order in which we fire does not affect the terminal configuration.
    \end{proof}
    
    Since the order in which we fire does not matter, we can assume that in the end game, we fire as follows:
    
    \begin{enumerate}
        \item Before $(1,n-2)$, all firing moves $(i,j)$ with level$(i)+j\geq n$ must have occurred. After this step, $(1,n-2)$ is the only possible move, node 1 has 3 chips, and every node on level 2 to $n-1$ has 2 chips.
        \item Fire every node from 1 to $2^{n-1}-1$ exactly once each. After this step, every node on level $n$ has 1 chip, and this chip is the chip it will have when the process stabilizes. Every node on level $n-1$ has no chip, every node on level 2 to $n-2$ has 2 chips and node 1 has 3 chips.
        \item Fire every node from 1 to $2^{n-2}-1$ exactly once each. After this step, every node on level $n$ and $n-1$ has 1 chip, and this chip is the chip it will have when the process stabilizes. Every node on level $n-2$ has no chip, every node on level 2 to $n-3$ has 2 chips and node 1 has 3 chips.
        \item Repeat the above step until the process stabilizes.
    \end{enumerate}
    
    The idea behind the above process is that, in the end game, we will fire in waves, and each wave includes firing every node that can fire once. In each wave, the root will fire and send 2 chips to its children. Every other node that has 2 chips will receive 1 chip from its parent, fire the 3 chips it have and finally receive 2 chips back from its children. The only exceptions are the nodes on the bottom-most level that fires as these nodes will not receive any chip from its children, so they will have no chip after the wave and will only receive 1 more chip from their parents in the next wave.
    
    An example of a wave of fires can be seen in Figure \ref{Figure 4}. First, the root fires and sends chips $4$ and $8$ to its children and sends chip $6$ to itself. Its two children then fire and each sends 1 chip back to the root and 2 chips to the nodes on level 3. The nodes on level 3 then fire, sending chips back to their parents and children. The nodes on level 4 does not fire, so their parents on level 3 does not receive any chip back, and this is the end of the wave. 
    
    \begin{figure}[h!]
        \centering
        \includegraphics[width = 0.55\textwidth]{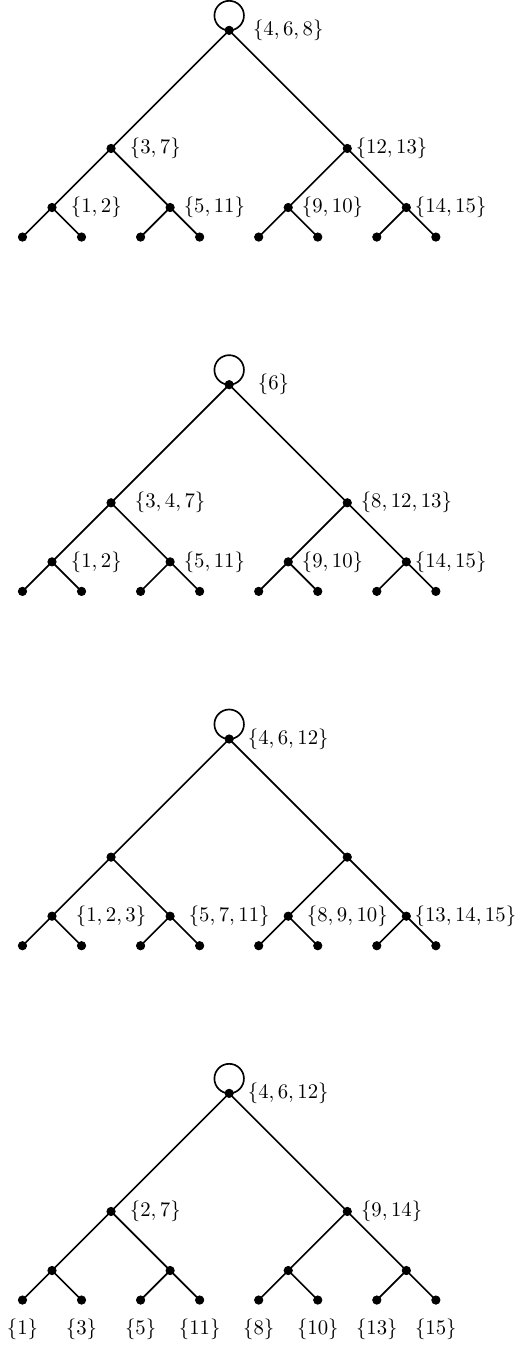}
        \caption{Example of a wave of fires}
        \label{Figure 4}
    \end{figure} 
    
    It can also be seen that as we fire in waves, in each wave, when a chip is sent down from a node to its child, it will be fired again when the child fires. Thus, it will be fired repeatedly until it is fired back from a node to its parent or it reaches the children of the bottom-most level that fires where it will stay for the rest of the process. With these results, we are now ready to prove our main result.
    
    \begin{repthm}{1.2}
        In labeled chip-firing on an infinite binary tree with $2^n-1$ chips (labeled from $1$ to $2^n-1$) initially placed at the origin, the terminal configuration always has one chip at each node of the first $n$ levels. Moreover, the bottom straight left and right descendants of a node contain the smallest and largest chips among its subtree.
    \end{repthm}
    
    \begin{proof}
        The first part of the theorem has already been proved in the previous section. We will now prove the second part. We will prove that the bottom straight left descendant of a node contains the smallest chip among its subtree, and the result for the bottom straight right descendant is analogous. Also, notice that if we want to prove the statement for a node $i$ that is not a right child of its parent, we only have to prove that the statement is true for its top straight ancestor $j$ since they have the same bottom straight left descendant and $(i)\subset(j)$ (here recall that $(i)$ denotes the subtree $(i)$). Thus, we only have to prove the statement for the nodes that are the right child of their parents.
        
        Consider an arbitrary such node $i$, let $c$ be the smallest chip that $i$ contains throughout the firing process. Clearly, $c$ is also the smallest chip the whole subtree $(i)$ contains throughout the firing process; thus, whenever $c$ is fired inside $(i)$, it is fired to the left child. Therefore, from the moment $c$ reaches $i$, it always stays among $i$ and its straight left descendant.
        
        Now note that during the endgame, from the move $(i,n-1-$level$(i))$, i.e. the first move of node $i$ in the endgame, $i$ alternatively sends back and receives 1 chip from its parent. Since $i$ is the right child of its parent, the chip it receives is larger than or equal to the chip it sends back, so $c$ cannot reach $i$ for the first time after $(i,n-1-$level$(i))$. This means that $c$ is already in $(i)$ during the first wave, and since $c$ is only fired to the left child when it is fired, $c$ will reach the end of the first wave, which is the bottom straight left descendant of $i$. This completes the proof.
    \end{proof}
    
    With the sorting property proved for the bottom descendants, a natural question to ask is whether this property can be generalized to nodes on other levels. There are indeed two other types of nodes where this property is true. The first type is the nodes whose top straight ancestor is their parent. Recall the definition of straight ancestors from Section \ref{sec:def}.
    
    \begin{proposition}\label{lemma:type1}
        For any node whose top straight ancestor is its parent, if the node is a left child, then the chip it contains is smaller than both the chips of the parent and the parent's right child and vice versa.
    \end{proposition}
    
    \begin{proof}
        The reason is that when the parent fires the last time, clearly the chip the left child receives is smaller than the chip the right child receives. Also, the parent sends 1 chip back and then receives 1 chip from its parent. Since the parent is a right child, the chip it receives is no smaller than the chip it sends back, and thus is no smaller than the chip the left child contains.
    \end{proof}
    
    The other type is the nodes that are parents of the bottom nodes.
    
    \begin{proposition} \label{lemma:type2}
        The nodes that are parents of the bottom nodes contain the smallest and largest chips among the subtree of their straight ancestors excluding the nodes on the bottom level.
    \end{proposition}
    
    \begin{proof}
         The proof for this case is just slightly more complicated than that for Theorem \ref{thm:main-thm}. Again, we will consider an arbitrary node $i$ that is the right child of its parent, and let $c$ be the second smallest chip $c$ that is sent to its left branch $(2i)$ through out the process. Clearly, inside $(2i)$, when $c$ is fired, it is fired to the left child, except when it is fired together with the smallest chip in which case it is sent back to the parent. Either way, $c$ will remains among $i$ and its straight left descendants, and with the same argument as in Theorem \ref{thm:main-thm}, $c$ is already in $(2i)$ in the second wave, which means $c$ will go to the parent of the bottom straight left descendant. We know that among the chips that are sent to the right branch $(2i+1)$ of node $(i)$, the smallest chip will go to the bottom level, and it is clear that the second smallest chip has to be larger than $c$, so $c$ is the smallest chip among the chips in $(i)$ excluding the nodes on the bottom level. This completes the proof.
    \end{proof}
    
    Examples of the two types can be seen in Figure \ref{Figure 1}. For instance, in the case $n=5$, the chip in node 6 (7) is indeed smaller than that in node 3 (27), and the chip in node 9 (14) is larger than that in node 4 (13). Also in the case $n=5$, we can check that the property is true for the parents of the bottom nodes, which are those on level 4.
    
    \section{Counterexamples}\label{sec:counterex}
    
    With Theorem \ref{thm:main-thm} and Proposition \ref{lemma:type2} proved, it is natural to attempt to generalize the property to nodes on other levels. Unfortunately, the statement for any other node is generally not true. 
    
    \begin{counterexample} \label{ce:ce0}
      The $k$th straight left descendant of node $i$, i.e. node $2^ki$, may not contain the smallest chip among $i$ and its first $k$ levels of descendants.
    \end{counterexample}
    
    For example, in Figure \ref{Figure 1}, the case of $n=5$, node $6$ $(7)$ has a chip with smaller label than that of node $4$ $(13)$ even though node $4$ is the straight left descendant of the root in the binary tree truncated to the top 3 levels, its label is not the smallest in its corresponding subtree.  
    
    From the counterexample, we also know that not only the straight left descendants may not contain the smallest chip among the whole subtree, it even may not contain the smallest chip among the nodes on the same level. Now that we know we cannot extend our main theorem to other nodes, we may want to try to study a more local conjecture. Our local conjecture is: in the terminal configuration, for every node $i$ not on the bottom-most level, its chip is larger than the chip of its left child's and is smaller than that of its right child's. Unfortunately, the conjecture is also not true, and we will show a counterexample here.
    
    \begin{counterexample} \label{ce:ce1}
        For some node $i$, its chip may be smaller than the chip of its left child's or larger than the chip of its right child's.
    \end{counterexample}
    
    To make the description of the counterexample simpler, we will consider a colored chip-firing process with red and blue chips similar to that discussed in Klivans' book (\cite[Sec. 5.4]{klivans2018mathematics}). Colored chip-firing is a variation of labeled chip-firing where each chip is colored red or blue. When three chips of the same color fire, we send one to the left child, one to the right child, and one to the parent. When three chips of different colors fire, we send a red chip to the left child, a blue chip to the right child, and the other chip to the parent. Suppose we have a sorting property for labeled chip-firing, we say the property is true for colored chip-firing if for every node $i,j$ such that node $i$ must contain a smaller chip than node $j$ in labeled chip-firing, node $j$ cannot contain a red chip while node $i$ contain a blue chip in colored chip-firing. We have the following proposition about the equivalence between colored chip-firing and labeled chip-firing from Klivans' book (\cite[Sec. 5.4]{klivans2018mathematics}).

    \begin{proposition} \label{prop:colored}
        A sorting property is true for labeled chip-firing if and only if it is true for all possible colorings of colored chip-firing.
    \end{proposition}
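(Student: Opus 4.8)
The plan is to set up a precise dictionary between colored executions and labeled executions. Write $N=2^n-1$; a coloring of the initial chips is determined by the number $r$ of red chips, so I identify it with the threshold $t=r$ and regard red as ``small'' and blue as ``large.'' Define the order-preserving \emph{color map} $\pi_t$ sending a chip carrying label $\ell$ to red if $\ell\le t$ and to blue if $\ell>t$, and record a colored configuration as the multiset of colors at each node (chips of a common color being interchangeable for the firing rule and for any sorting property). The first thing to check is that $\pi_t$ carries legal labeled firings to legal colored firings: when three chips with labels $a<b<c$ fire, the labeled rule sends $a$ left, $b$ to the parent, $c$ right, and the color pattern of $\{a,b,c\}$, once sorted by label, must be one of RRR, RRB, RBB, BBB since every blue label exceeds every red label; in each of these cases ``smallest $\to$ left, middle $\to$ parent, largest $\to$ right'' is precisely one of the allowed colored firings, because the colored rule on a non-monochromatic triple always routes a red chip to the left child and a blue chip to the right child. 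Since $\pi_t$ leaves the number of chips at every node unchanged, it also preserves stability; hence it sends any stabilizing labeled execution to a stabilizing colored execution with $t$ red chips.

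Next I would show that every colored execution lifts. Given a stabilizing colored execution with $r$ red chips, build a stabilizing labeled execution mapped onto it by $\pi_r$, by induction on the number of moves: declare labels $1,\dots,r$ red and $r+1,\dots,N$ blue, so the standard labeled starting configuration projects to the colored one; at each step the colored move fires a prescribed color pattern at some node $v$, and since the current labeled configuration has the same color multiset at $v$ one may fire labels of exactly those colors — and because every red label lies below every blue label, the forced labeled directions ``smallest left, middle to parent, largest right'' automatically reproduce the prescribed colored move. Stability of the two endpoints again matches under $\pi_r$.

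From these two facts both directions follow quickly. Let $S$ be the set of node pairs $(i,j)$ asserted by the sorting property (``$i$ holds a chip smaller than $j$'s'' in labeled chip-firing). For the forward direction, assume $S$ holds for labeled chip-firing; given any coloring (say $r$ reds) and any reachable terminal colored configuration $T$, lift $T$ to a terminal labeled configuration $\widetilde T$ with $\pi_r(\widetilde T)=T$, apply $S$ to get $\mathrm{label}_{\widetilde T}(i)<\mathrm{label}_{\widetilde T}(j)$ for $(i,j)\in S$, and apply the order-preserving $\pi_r$ to conclude that in $T$ node $i$ is not blue while node $j$ is red, so $S$ holds for that coloring, hence for all colorings. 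For the converse, assume $S$ holds for every coloring; given a reachable terminal labeled configuration $T$ and a pair $(i,j)\in S$, set $t=\mathrm{label}_T(j)$ and apply $\pi_t$ to a stabilizing execution reaching $T$: in the resulting terminal colored configuration node $j$ is red, so node $i$ cannot be blue, forcing $\mathrm{label}_T(i)\le t=\mathrm{label}_T(j)$, i.e. $\mathrm{label}_T(i)<\mathrm{label}_T(j)$ since labels are distinct. Thus $S$ holds for labeled chip-firing.

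The main obstacle is really just the compatibility check in the first step: that the labeled rule ``small $\to$ left, middle $\to$ parent, large $\to$ right'' is, under the order-preserving color map, always a legal colored firing, which boils down to the colored rule never routing blue to the left child or red to the right child of a mixed triple. Everything after that is routine — the lift is a mechanical induction and the two implications are bookkeeping with $\pi_t$. The one convention to be explicit about is that a colored configuration should be read as the multiset of colors at each node, since that is exactly what allows the lift to choose freely among equally-colored labels.
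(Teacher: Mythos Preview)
Your proof is correct and is essentially the paper's argument spelled out in greater detail: both directions rest on the same threshold-based dictionary between labels and colors, with labeled executions projecting to colored ones and colored executions lifting to labeled ones. The only cosmetic difference is the threshold chosen in the converse---you take $t=\mathrm{label}_T(j)$ so that node $j$ becomes red, while the paper colors the smallest $i$ chips red for each chip pair $i<j$---but either choice yields the same conclusion.
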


    Now we will show a counterexample in colored chip-firing for counterexample \ref{ce:ce1}. First, let us quickly introduce a notation that will be useful. Denote $[i,XYZ]$, where $X,Y,Z \in\{R,B\}$, to be "fire 3 chips colored $X,Y,Z$ at node $i$" with $R$ for red and $B$ for blue. Thus, $[1,RRB]$ means "fire 2 red chips and 1 blue chips at node $1$". Furthermore, we will use $[i,XYZ]^n$ to say "apply $[i,XYZ]$ $n$ times".

    Now consider colored chip-firing with 63 chips, in which 23 chips are colored red and 40 chips are colored blue, initially placed at the root. We will first fire the red chips as follows:\\
    
    $[1,RRR]^{11}$ $\longrightarrow$
    $[2,RRR]^3$ $\longrightarrow$
    $[3,RRR]^3$ $\longrightarrow$
    $[4,RRR]$ $\longrightarrow$
    $[5,RRR]$ $\longrightarrow$
    $[6,RRR]$ $\longrightarrow$
    $[7,RRR]$ $\longrightarrow$
    $[1,RRR]^3$ $\longrightarrow$
    $[2,RRR]^2$ $\longrightarrow$
    $[3,RRR]^2$ $\longrightarrow$
    $[1,RRR]^2$ $\longrightarrow$
    $[2,RRR]$ $\longrightarrow$
    $[3,RRR]$ $\longrightarrow$
    $[4,RRR]$ $\longrightarrow$
    $[5,RRR]$ $\longrightarrow$
    $[6,RRR]$ $\longrightarrow$
    $[7,RRR]$ $\longrightarrow$
    $[1,RRR]$ $\longrightarrow$
    $[2,RRR]$ $\longrightarrow$
    $[3,RRR]$ $\longrightarrow$
    $[1,RRR]$\\
    
    \begin{figure}[h!]
        \centering
        \includegraphics[width = 0.9\textwidth]{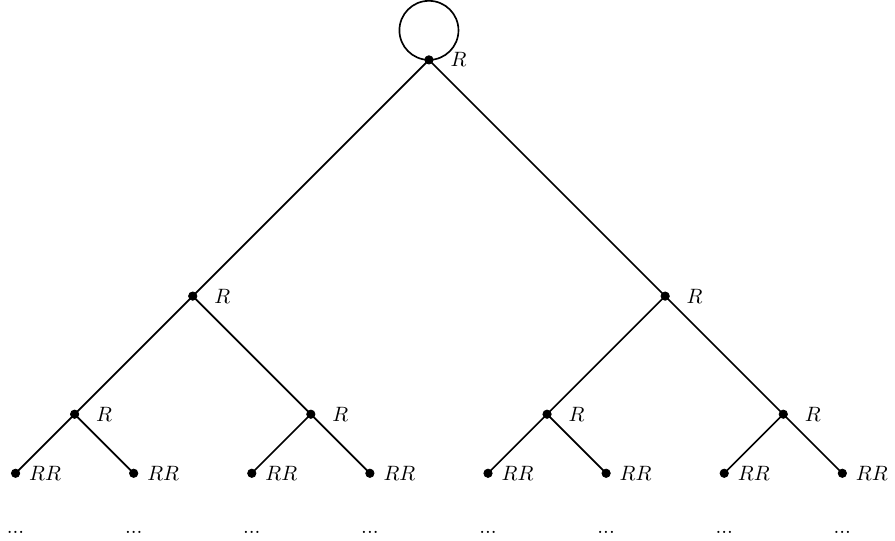}
        \caption{Counterexample \ref{ce:ce1} - red chips arrangement (The 40 blue chips are still at the root)}
        \label{Figure 7}
    \end{figure}
    
    At this moment, the red chips are arranged like in Figure \ref{Figure 7}. Then we will fire wisely so that in the subtree $(2)$, we fire as many red chips as possible out of the straight left descendants. In the subtree $(3)$, on the other hand, we fire the blue chips without touching the red chips. Specifically, we will fire as follows:\\
    
    $[1,RBB]$ $\longrightarrow$
    $[1,BBB]^{19}$ $\longrightarrow$
    $[2,RBB]^2$ $\longrightarrow$
    $[1,BBB]$ $\longrightarrow$
    $[4,RRR]$ $\longrightarrow$
    $[8,RRR]$ $\longrightarrow$
    $[2,RBB]$ $\longrightarrow$
    $[2,BBB]^4$ $\longrightarrow$
    $[1,BBB]$ $\longrightarrow$
    $[2,BBB]$ $\longrightarrow$
    $[1,BBB]^2$ $\longrightarrow$
    $[4,RBB]^2$ $\longrightarrow$
    $[2,BBB]$ $\longrightarrow$
    $[5,RBB]$ $\longrightarrow$
    $[5,BBB]^2$ $\longrightarrow$
    $[2,BBB]$ $\longrightarrow$
    $[4,BBB]$ $\longrightarrow$
    $[9,RBB]$ $\longrightarrow$
    $[10,RBB]$ $\longrightarrow$
    $[5,BBB]$ $\longrightarrow$
    $[2,BBB]$ $\longrightarrow$
    $[11,RBB]$ $\longrightarrow$
    $[10,RRB]$ $\longrightarrow$
    $[11,RBB]$ $\longrightarrow$
    $[5,RBB]$ $\longrightarrow$
    $[1,BBB]$ $\longrightarrow$
    $[3,BBB]^8$ $\longrightarrow$
    $[1,BBB]$ $\longrightarrow$
    $[2,BBB]$ $\longrightarrow$
    $[4,BBB]$ $\longrightarrow$
    $[8,RBB]$ $\longrightarrow$
    $[9,RBB]$ $\longrightarrow$
    $[1,BBB]^4$ $\longrightarrow$
    $[2,BBB]$ $\longrightarrow$
    $[4,BBB]$ $\longrightarrow$
    $[5,BBB]$ $\longrightarrow$
    $[2,BBB]$ $\longrightarrow$
    $[1,BBB]$ $\longrightarrow$
    $[3,BBB]^2$ $\longrightarrow$
    $[1,BBB]$ $\longrightarrow$
    $[2,BBB]$ $\longrightarrow$
    $[6,BBB]^3$ $\longrightarrow$
    $[7,BBB]^3$ $\longrightarrow$
    $[3,BBB]^2$ $\longrightarrow$
    $[1,BBB]$ $\longrightarrow$
    $[3,BBB]$ $\longrightarrow$
    $[1,BBB]$ $\longrightarrow$
    $[6,BBB]$ $\longrightarrow$
    $[7,BBB]$ $\longrightarrow$
    $[3,BBB]$ $\longrightarrow$
    $[12,BBB]$ $\longrightarrow$
    $[13,BBB]$ $\longrightarrow$
    $[14,BBB]$ $\longrightarrow$
    $[15,BBB]$ $\longrightarrow$
    $[6,BBB]$ $\longrightarrow$
    $[7,BBB]$ $\longrightarrow$
    $[3,RBB]$ $\longrightarrow$
    $[6,RRB]$ $\longrightarrow$
    $[7,RBB]$ $\longrightarrow$
    $[12,RBB]$ $\longrightarrow$
    $[13,BBB]$ $\longrightarrow$
    $[14,RBB]$ $\longrightarrow$
    $[15,BBB]$\\
    
    Thus, before the endgame, the subtree $(2)$ will look like Figure \ref{Figure 8} while subtree $(3)$ look like Figure \ref{Figure 8b}. Now we will fire in waves. In the first wave, the red chip in node $8$ will be sent to node $16$, and thus when node $16$ fires, there will be only 2 red chips remaining among node $2$ and its straight left descendants. Hence, after 3 waves, nodes $32$ and $16$ will contain a red chip, but node $8$ will contain a blue chip. On the other hand, in the subtree $(3)$ after the first wave, nodes $12, 13, 14, 15$ will each send 1 red chip back to their parents. Then after the second wave, nodes $6,7$ will each send 1 red chip back to node $3$, so after the third wave, node $3$ will send 1 red chip back to node $1$. Hence, after the fourth wave, node $4$ will contain a red chip, which is a counterexample since node $8$ contains a blue chip (see Figure \ref{Figure 8c}).
    
    \begin{figure}[h!]
        \centering
        \includegraphics[width = 0.9\textwidth]{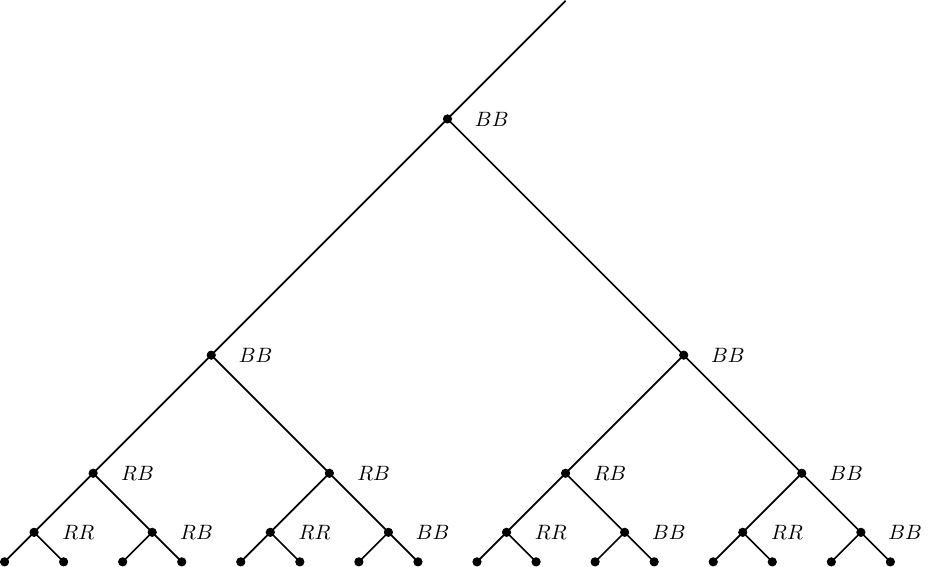}
        \caption{Counterexample \ref{ce:ce1} - subtree $(2)$ before the endgame}
        \label{Figure 8}
    \end{figure}
    
    \begin{figure}[h!]
        \centering
        \includegraphics[width = 0.9\textwidth]{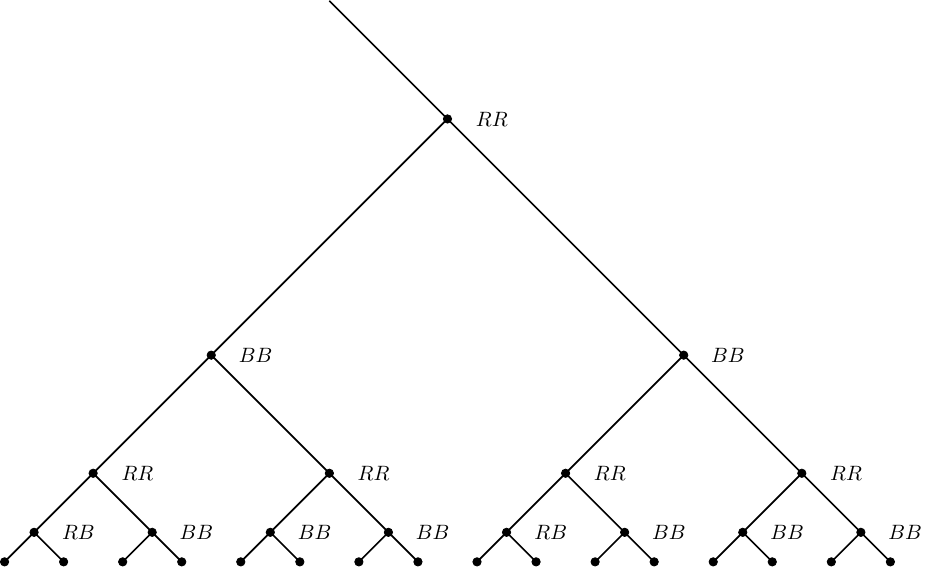}
        \caption{Counterexample \ref{ce:ce1} - subtree $(3)$ before the endgame}
        \label{Figure 8b}
    \end{figure}
    
    \begin{figure}[h!]
        \centering
        \includegraphics[width = 0.9\textwidth]{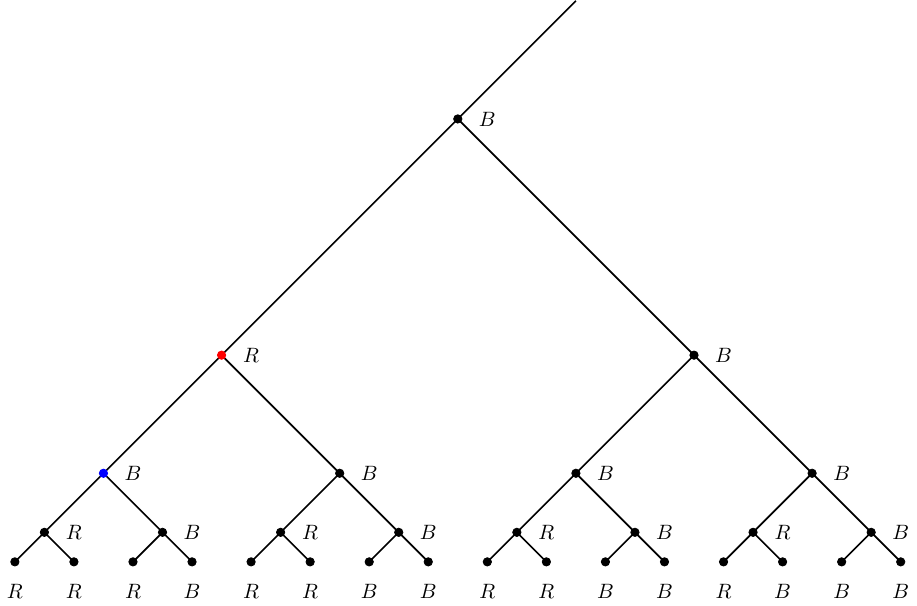}
        \caption{Counterexample \ref{ce:ce1} - subtree $(2)$ of the terminal configuration}
        \label{Figure 8c}
    \end{figure}
    
    We can label the 23 red chips from 1 to 23 and the blue chips from 24 to 63 to verify that this is indeed a counterexample for labeled chip-firing.

    \section{Firing Move Poset}\label{sec:poset}
    
    In this section, we will study the poset of "endgame" moves. Recall that in Section 4, we defined "endgame" moves in a firing process with $2^n-1$ chips to be the moves $(i,j)$ where level$(i)+j<n$. In other words, the last $j$ instances of firing at node $i$, as long as this inequality hold. We also recall the relation $(i_1,j_1)>(i_2,j_2)$ if $(i_1,j_1)$ must occur before $(i_2,j_2)$ and proved that $\left(\left[\dfrac{i}{2}\right],j\right)<\left( i,j\right)<\left(\left[\dfrac{i}{2}\right],j+1\right)$.
    
    Now let us give a rigorous definition for our poset. 
    
    \begin{definition}\label{def:P_n}
        Define the partial ordered set $P_n$ to be a set whose elements are the moves $(i,j)$ where level$(i)+j<n$, together with the binary relation $\leq$ where $(i_2,j_2)\leq(i_1,j_1)$ if $(i_1,j_1)$ must not occur after $(i_2,j_2)$.
    \end{definition}
    
    The Hasse diagrams of $P_4$ and $P_5$ can be seen in Figure \ref{Figure 11}. From the definition of $P_n$, it is clear that $P_1\subset P_2\subset P_3 \subset...$. Let us recall some terminology common in enumerative combinatorics, e.g. see \cite[Chapter 3]{stanley2011enumerative}. We say that a poset $P$ is a \textit{lattice} if every pair of elements $s,t$ has a unique least upper bound (called a \textit{join}, denoted $s \vee t$) and a unique greatest lower bound (called a \textit{meet}, denoted $s \wedge t$). $P$ is \textit{graded} if there is a rank function $\rho: P \rightarrow \mathbb{N}$ such that $\rho(s) = 0$ for every minimal element $s$, and $\rho(t) = \rho(s) + 1$ if element $t$ covers $s$, meaning $s \leq t$ with no elements in between.  This is often denoted as $s \lessdot t$. Note that when $P$ is graded, its rank function is also unique. A lattice is \textit{modular} if it is graded and its rank function $\rho$ satisfies
    \[ \rho(s) + \rho(t) = \rho(s \vee t) + \rho(s \wedge t) \]
    for all $s,t$.
    
    \begin{figure}[h!]
        \centering
        \includegraphics[width = 0.9\textwidth]{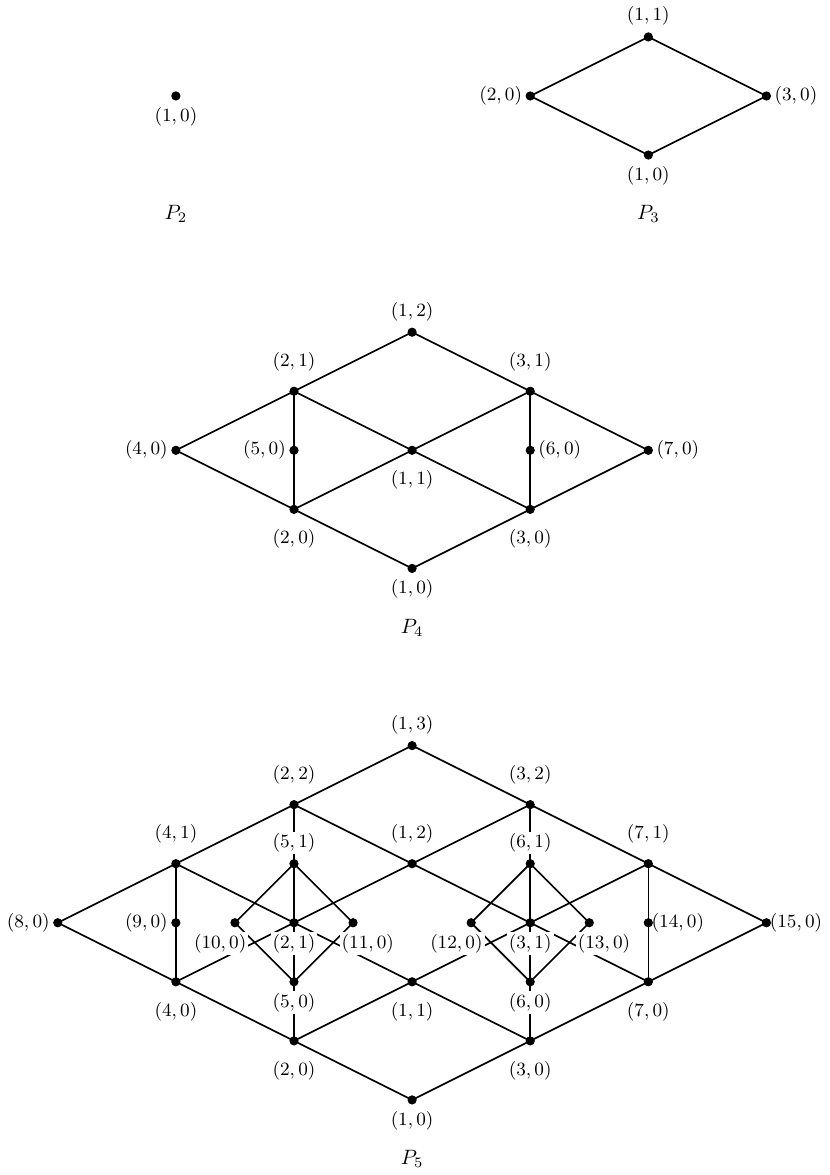}
        \caption{Hasse diagrams of the posets $P_2,P_3,P_4,P_5$}
        \label{Figure 11}
    \end{figure}
    
    We will first show that the posets $P_n$ are lattices.
    
    \begin{proposition} \label{prop:lattice}
        $P_n$ is a lattice.
    \end{proposition}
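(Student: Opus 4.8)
The plan is to read an explicit description of the order on $P_n$ off of Proposition~\ref{prop:fire-order}, and then to write meets down directly.

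First I would record the covering relations. Proposition~\ref{prop:fire-order}, together with the relations $(1,j)<(2,j),(3,j)$ and $(2,j-1),(3,j-1)<(1,j)$ at the root recorded in its proof, shows that in $P_n$ every element $(i,j)$ is covered from above exactly by those of $([i/2],j+1)$, $(2i,j)$, $(2i+1,j)$ lying in $P_n$, and covers exactly those of $([i/2],j)$, $(2i,j-1)$, $(2i+1,j-1)$ lying in $P_n$; since in the endgame the subtrees hanging off distinct children fire independently (Corollary~\ref{cor:confluence}), these six families are \emph{all} the covering relations, so the order of $P_n$ is their transitive closure. I would then verify that this order is exactly
\[
(a,s)\le(b,t)\ \Longleftrightarrow\ s+\mathrm{level}(a)-\mathrm{level}\big(\mathrm{lca}(a,b)\big)\le t ,
\]
writing $\delta(a,b):=\mathrm{level}(a)-\mathrm{level}(\mathrm{lca}(a,b))\ge 0$ for the number of downward steps from $\mathrm{lca}(a,b)$ to $a$. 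The displayed relation is visibly reflexive and antisymmetric; transitivity reduces to the subadditivity $\delta(a,c)\le\delta(a,b)+\delta(b,c)$, which holds because $\mathrm{lca}(a,b)$ and $\mathrm{lca}(b,c)$ are comparable ancestors of $b$ and whichever is the deeper one forces the value of $\mathrm{lca}(a,c)$. Finally, a direct computation shows its covering relations are precisely the six families above, so the two orders coincide.

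With the formula in hand, two reductions are immediate. Since $\mathrm{lca}(1,i)=1$ for every node $i$, we get $(1,0)\le(i,j)\le(1,n-2)$ for all $(i,j)\in P_n$ (the right-hand inequality is just $\mathrm{level}(i)+j\le n-1$), so $P_n$ is bounded with $\hat 0=(1,0)$ and $\hat 1=(1,n-2)$; hence it is enough to show $P_n$ is a meet-semilattice, because a finite poset with a top element in which every two elements have a meet is a lattice. (Alternatively, one checks that $(i,j)\mapsto(i,\,n-1-\mathrm{level}(i)-j)$ sends each of the six covering families to one of the others with its endpoints swapped, so $P_n$ is self-dual; meets then also give joins, and this self-duality is wanted anyway for the ``symmetric'' claim to come.)

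It remains to produce meets. Fix $(a,s),(b,t)\in P_n$ and put $c_0=\mathrm{lca}(a,b)$. For a node $c$ the largest value $u$ making $(c,u)$ a common lower bound is $u_c:=\min\big(s-\delta(c,a),\,t-\delta(c,b)\big)$, subject to $u_c\ge 0$; note $u_{c_0}=\min(s,t)\ge 0$, so common lower bounds exist. As the candidate meet I would take $(c^{\ast},u_{c^{\ast}})$, where $c^{\ast}$ is the node on the tree-path from $c_0$ toward whichever of $a,b$ carries the smaller second coordinate, taken at the level that balances the two constraints and clamped to stay on the segment between $c_0$ and that endpoint. That $(c^{\ast},u_{c^{\ast}})\le(a,s)$ and $\le(b,t)$ is immediate from the formula. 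The crux — and the step I expect to be the main obstacle — is to show that $(c^{\ast},u_{c^{\ast}})$ dominates every common lower bound $(c,u_c)$, i.e. $u_c+\delta(c,c^{\ast})\le u_{c^{\ast}}$ for every node $c$ with $u_c\ge 0$; expanding $\delta$ and using its subadditivity, this breaks into a finite case analysis (whether $c$ is an ancestor of $c^{\ast}$, lies below $c^{\ast}$, lies on the root-path of $a$ or of $b$, or branches off elsewhere), in each case the defining level of $c^{\ast}$ being exactly what makes the inequality hold. Granting this, $(a,s)\wedge(b,t)=(c^{\ast},u_{c^{\ast}})$, so $P_n$ is a meet-semilattice and therefore, by the reduction above, a lattice.
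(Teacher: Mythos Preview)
Your plan is sound and reaches the same conclusion as the paper, but the route is organized differently. The paper never writes down a closed formula for the order; instead it fixes $(u,p)$ and $(v,q)$, walks the unique tree-path $u=a_0\text{--}\cdots\text{--}w\text{--}\cdots\text{--}b_0=v$ through the lowest common ancestor $w$, builds the two covering chains $C_1,C_2$ along that path, and locates the join as the first element $(a_k,p+k)$ of $C_1$ that dominates $(v,q)$, checking that $p+k=q+j$. The verification that this element dominates every upper bound is then a short case split on whether the test node lies on the path $a_0,\ldots,w,\ldots,b_0$ or branches off it; meets are declared analogous.

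Your version front-loads the structure by proving the explicit comparability criterion $(a,s)\le(b,t)\iff s+\delta(a,b)\le t$, which the paper never isolates. This buys you the $\hat0,\hat1$ reduction and the self-duality $(i,j)\mapsto(i,n-1-\mathrm{level}(i)-j)$ for free (the paper only records the latter later, inside the proof of Theorem~\ref{thm:modular}), and it would make the modularity computation in the next theorem essentially a one-liner. On the other hand, your candidate meet (``balance the two constraints, clamp to the segment'') and the case analysis you flag as the crux are exactly the content of the paper's chain argument rephrased through $\delta$: both proofs select the same node $c^\ast$ on the $a$--$b$ path at the level where $s-\delta(c,a)=t-\delta(c,b)$, and your case split ($c$ on the $a$-side, on the $b$-side, or off the path) is the paper's split. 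So the two arguments converge at the hard step, and that step is no harder in your formulation than in theirs. One point to tighten: the claim that the six covering families generate \emph{all} relations in $P_n$ needs more than a pointer to Corollary~\ref{cor:confluence}; what you actually want is that every linear extension of those relations is realizable as a firing order in the endgame, which follows from the ``each endgame fire occurs with exactly three chips'' clause of Proposition~\ref{prop:fire-order} but deserves an explicit sentence.
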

    
    \begin{proof}
        Consider any pair of elements $(u,p)$ and $(v,q)$, we will prove that their join exists, and the argument for their meet follows analogously.
        
        Since $u$ and $v$ are nodes of a binary tree, there exists a unique path from $u$ to $v$. Suppose the path is $u=a_0-a_1-a_2-...-a_i=w=b_j-b_{j-1}-...-b_1-b_0=v$ with $i,j\geq0$ and $a_k,b_k$ are children of $a_{k+1},b_{k+1}$ for all $k$. For now, let us temporarily remove the restriction level$(i)+j<n$ for elements $(i,j)\in P_n$ and suppose that every element $(i,j)$ is in $P_n$ and Proposition \ref{prop:fire-order} is true for all elements. This temporary inclusion is only for our sake of convenience and we will show that it will not affect our argument. With this inclusion, however, we can now form the chains $C_1:(u,p)\lessdot(a_1,p+1)\lessdot...\lessdot(w,p+i)\lessdot(b_{j-1},p+i)\lessdot...\lessdot(v,p+i)$ and $C_2:(v,q)\lessdot(b_1,q+1)\lessdot...\lessdot(w,q+j)\lessdot(a_{i-1},q+j)\lessdot...\lessdot(u,q+j)$ without any concern that some elements may not be in $P_n$. It is worth noticing that in the $C_1$, every element $(a,b)$ is the element of type $(a,k)$ with the lowest value of $k$ such that $(a,k)\geq(u,p)$. In other words, every move $(a,b)$ in the chain is the latest move of node $a$ that must not occur after $(u,p)$. The same applies for $C_2$.
        
        If $p+i\leq q$ then $(u,p)<(v,q)$, so their join is simply $(v,q)$. Similarly, if $q+j\leq p$ then $(v,q)<(u,p)$, so their join is simply $(u,p)$. Suppose none of the above inequalities is true, then since $p+i>q$, we have $(v,p+i)>(v,q)$. Also, since none of the above inequalities is true, $(u,p)$ and $(v,q)$ are incomparable, there is an element in $C_1$ such that it is larger than $(v,q)$ but every element before it is not comparable with $(v,q)$. Suppose this element is $(a_k,p+k)$ for some $0<k\leq i$, we will see shortly that if this element is $(b_k,p+i)$ for some $0<k\leq j$ then we can consider the chain $C_2$ and the argument follows analogously.
        
        First, we will show that $p+k=q+j$. It is easy to see that $p+k\geq q+j$ since $(a_k,q+j)$ is the latest move of node $a_k$ that must not occur after $(v,q)$. On the other hand, if $p+k>q+j$, then $p+k-1\geq q+j$, so $(a_{k-1},p+k-1)\geq(a_{k-1},q+j)>(v,q)$, which is a contradiction since we assume $(a_{k-1},p+k-1)$ is incomparable with $(v,q)$. Thus, $p+k=q+j$, so $(a_k,p+k)$ is also in $C_2$.
        
        Second, we will show that $(a_k,p+k)=(a_k,q+j)$ plays the same role in $C_2$, that is, $(a_k,q+j)$ is the first element that is larger than $(u,p)$ in $C_2$. Clearly, $(a_k,q+j)>(u,p)$ since it is an element of $C_1$. Also, if $k<i$ then $(a_{k+1},q+j)<(a_{k+1},p+k+1)$, which is the latest move of node $a_{k+1}$ that must not occur after $(u,p)$, so $(a_{k+1},q+j)$ is incomparable with $(u,p)$. If $k=i$ then similarly, $(b_{j-1},p+i-1)<(b_{j-1},p+i)$, which is the latest move of node $b_{j-1}$ that must not occur after $(u,p)$, so $(b_{j-1},p+i-1)$ is incomparable with $(u,p)$. Thus, $(a_k,p+k)=(a_k,q+j)$ is the first element that is larger than $(u,p)$ in $C_2$.
        
        Next, we will show that $(a_k,p+k)$ is the join of $(u,p)$ and $(v,q)$. Clearly, $(a_k,p+k)$ is an upper bound of $(u,p)$ and $(v,q)$. Consider an arbitrary upper bound $(s,t) \neq (a_k,p+k)$, we will prove that $(s,t)>(a_k,p+k)$. Clearly, if $s=a_k$ then $t>p+k$, so $(s,t)>(a_k,p+k)$. If $s$ is one of the $a_0,a_1,...,a_{k-1}$, then if $t<q+j$, $(s,t)$ is either smaller than or incomparable to $(v,q)$, which is a contradiction. If $t\geq q+j$ then $(s,t)>(a_k,t)\geq(a_k,q+j)$. The analogous argument applies in case $s$ is one of the $a_{k+1},...,w,...,b_0$. Lastly, if $s$ is not any of the nodes $a_0,...,w,...,b_0$, then there exists two chains $(s,t)\gtrdot(c_1,d_1)\gtrdot...\gtrdot(u,d_l)\geq(u,p)$ and $(s,t)\gtrdot(e_1,f_1)\gtrdot...\gtrdot(v,f_m)\geq(v,q)$. Since our graph is a tree, the paths $s-c_1-..-u$ and $s-e_1-...-v$ must merge at some point, i.e. there exist $r$ such that $c_1=e_1,c_2=e_2,...,c_r=e_r$ and $c_r=e_r$ is one of the nodes $a_0,...,w,...,b_0$. Thus, $(s,t)>(c_r,d_r)\geq(a_k,p+k)$. Hence, $(a_k,p+k)$ is the join of $(u,p)$ and $(v,q)$.
        
        Finally, we will show that the above temporary inclusion of $P_n$ does not affect our result. This is because $(1,n-2)$ is the $\hat{1}$ element of $P_n$, so $(1,n-2)$ is an upper bound of $(u,p)$ and $(v,q)$. Thus, $(1,n-2)>(a_k,p+k)$, which means $(a_k,p+k)$ is also an endgame moves and thus is in the original $P_n$. This completes the proof.
    \end{proof}
    
    Next, we define the following rank function $\rho((i,j))=$level$(i)+2j$. We will prove that $\rho$ makes $P_n$ a graded poset.
    
    \begin{proposition} \label{prop:graded}
        $P_n$ equipped with the rank function $\rho((i,j))=$level$(i)+2j$ is graded.
    \end{proposition}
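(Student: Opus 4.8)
The plan is to verify that $\rho$ is a rank function on $P_n$: that $P_n$ is bounded, and that $\rho$ increases by exactly one along every covering relation. (Subtracting the constant $\rho(\hat 0)=1$ then puts the rank function in the usual normalization; I will keep $\rho$ as stated, matching the statement.)

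First I would pin down the extreme elements. By Corollary~\ref{cor:confluence}, $(1,n-2)$ is the first move of the endgame, so every element of $P_n$ must occur after it and $\hat 1=(1,n-2)$. Dually, in the wave description of the endgame given just before Theorem~\ref{thm:main-thm}, the last wave consists of the root alone, so $(1,0)$ is the global last firing move and every element of $P_n$ must occur before it; hence $\hat 0=(1,0)$. (For $n\le 2$ these coincide or $P_n$ is empty, and the statement is trivial.) Thus $P_n$ is a finite bounded poset with $\rho(\hat 0)=1$ and $\rho(\hat 1)=1+2(n-2)=2n-3$.

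Next comes the key computation, which is immediate: each of the two generating relations of Proposition~\ref{prop:fire-order} raises $\rho$ by exactly one, since $\mathrm{level}([i/2])=\mathrm{level}(i)-1$ gives $\rho((i,j))-\rho(([i/2],j))=1$ and $\rho(([i/2],j+1))-\rho((i,j))=1$. As in the chain manipulations used to prove Proposition~\ref{prop:lattice}, the order on $P_n$ is the transitive closure of these generating relations, so every strict relation $x<y$ is witnessed by a saturated chain of generators; since each step changes $\rho$ by $1$, every such chain has the same length $\rho(y)-\rho(x)$, and in particular $\rho$ is strictly order preserving. If now $x\lessdot y$ is a covering relation, a generating chain from $x$ to $y$ cannot pass through any third element, hence has length one, so $\rho(y)=\rho(x)+1$. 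Therefore $\rho$ is a rank function and $P_n$ is graded, of rank $\rho(\hat 1)-\rho(\hat 0)=2n-4$. With a little more bookkeeping one even reads off the Hasse diagram: $(z,w)$ covers $(x,y)$ exactly when $z=[x/2]$ and $w=y+1$, or $x=[z/2]$ and $w=y$.

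The one point that needs care — and the same point implicitly underlying Proposition~\ref{prop:lattice} — is the assertion that the firing process forces no precedence among endgame moves beyond the parent/child relations of Proposition~\ref{prop:fire-order}. This is exactly what Corollary~\ref{cor:confluence} supplies: from the moment $(1,n-2)$ can occur, whenever a node is ready it holds precisely three chips and none of its neighbors is ready, so the only constraints on firing order are the recorded ones. Given that, everything else above is routine arithmetic with $\rho$.
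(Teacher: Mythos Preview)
Your proof is correct and follows the same core idea as the paper: the covers of $(i,j)$ are exactly $(2i,j)$, $(2i+1,j)$, and $([i/2],j+1)$, and each increases $\rho$ by one. The paper's proof simply asserts this and checks the arithmetic in one line; you give considerably more detail, in particular identifying $\hat 0=(1,0)$ and $\hat 1=(1,n-2)$ and, more importantly, explicitly addressing why the order on $P_n$ is the transitive closure of the Proposition~\ref{prop:fire-order} relations (so that no extraneous covers appear). The paper takes that last point for granted both here and in the proof of Proposition~\ref{prop:lattice}, so your added paragraph invoking Corollary~\ref{cor:confluence} is a genuine improvement in rigor rather than a different method.
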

    
    \begin{proof}
        This is trivial from the fact that for every element $(i,j)\in P_n$, there are only three possible elements covering $(i,j)$: $(2i,j),(2i+1,j)$ and $\left(\left[\dfrac{i}{2}\right],j+1\right)$. It is easy to check that the rank of these elements are all $\rho((i,j))+1$.
    \end{proof}
    
    Now we are ready to state our second theorem.
    
    \begin{theorem} \label{thm:modular}
        $P_n$ is a symmetric modular lattice.
    \end{theorem}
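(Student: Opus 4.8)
The plan is to prove the two assertions in turn: first produce an explicit order-reversing automorphism of $P_n$ (this gives "symmetric", i.e. self-dual), and then deduce modularity from the rank function of Proposition \ref{prop:graded}, using the self-duality to handle meets. The tool for the second step is the standard fact that a finite graded lattice $L$ with rank function $\rho$ is modular if and only if $\rho(x)+\rho(y)=\rho(x\vee y)+\rho(x\wedge y)$ for all $x,y\in L$; gradedness is already Proposition \ref{prop:graded}, so everything reduces to verifying this valuation identity.

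For symmetry, write $\ell(i)=\mathrm{level}(i)$ and define $\phi\colon P_n\to P_n$ by $\phi\big((i,j)\big)=\big(i,\,n-1-\ell(i)-j\big)$. Since $(i,j)\in P_n$ exactly when $0\le j\le n-1-\ell(i)$, for each fixed node $i$ the map $\phi$ reverses the allowed range of second coordinates, so $\phi$ is a bijection of $P_n$ and an involution. The key point is that $\phi$ reverses each of the three covering relations from Proposition \ref{prop:graded}. For instance $(i,j)\lessdot(2i,j)$ is sent to $\big(2i,\,n-2-\ell(i)-j\big)\lessdot\big(i,\,n-1-\ell(i)-j\big)$, which is a cover of type $(m,k)\lessdot([m/2],k+1)$; symmetrically $(i,j)\lessdot(2i+1,j)$ is handled the same way, and a cover $(i,j)\lessdot([i/2],j+1)$ is carried to a cover of one of the first two types (with the case $i=1$, where $[i/2]=1$, checked directly). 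Hence $\phi$ is order-reversing, so $P_n$ is isomorphic to its dual. Note $\rho(\phi(x))=2n-2-\rho(x)$, which in particular re-proves rank-symmetry.

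For modularity, take $x=(u,p)$ and $y=(v,q)$. If they are comparable the identity is trivial, so assume them incomparable and let $w$ be the least common ancestor of $u$ and $v$, with $\ell_u=\ell(u)$, $\ell_v=\ell(v)$, $\ell_w=\ell(w)$. Reading off the join from the proof of Proposition \ref{prop:lattice}: it is the move $(a_k,p+k)$ lying on the tree-path from $u$ to $v$, with $p+k=q+(\ell_v-\ell_w)$, so $\mathrm{level}(a_k)=\ell_u-k$ and $\rho(x\vee y)=(\ell_u-k)+2(p+k)=\ell_u+\ell_v+p+q-\ell_w$; the symmetric description of the join via the $v$-side chain yields the same value, which is the needed consistency check. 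For the meet I would invoke $\phi$: because $\phi$ is a lattice anti-automorphism, $x\wedge y=\phi\big(\phi(x)\vee\phi(y)\big)$, and $\phi(x),\phi(y)$ have the same underlying nodes $u,v$ (hence the same $w$) and are again incomparable, so the join formula gives $\rho\big(\phi(x)\vee\phi(y)\big)=\ell_u+\ell_v+(n-1-\ell_u-p)+(n-1-\ell_v-q)-\ell_w=2n-2-p-q-\ell_w$, whence $\rho(x\wedge y)=(2n-2)-\rho\big(\phi(x)\vee\phi(y)\big)=p+q+\ell_w$. Adding, $\rho(x\vee y)+\rho(x\wedge y)=\ell_u+\ell_v+2p+2q=(\ell_u+2p)+(\ell_v+2q)=\rho(x)+\rho(y)$, which completes the proof.

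The main obstacle I anticipate is bookkeeping rather than conceptual: pinning down the exact node and second coordinate of $x\vee y$ from the somewhat intricate case analysis in the proof of Proposition \ref{prop:lattice} (including which of the two chains the join lands on), verifying that the degenerate subcases there are precisely the comparable cases where the identity is automatic, and confirming that every move produced stays inside $P_n$ under the endgame restriction $\mathrm{level}+j<n$ — the last point handled exactly as in Proposition \ref{prop:lattice}, since $(1,n-2)=\hat 1$ dominates everything. The remaining verifications (that $\phi$ maps into $P_n$ and that the three cover computations are exhaustive) are routine.
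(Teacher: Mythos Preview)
Your proof is correct and largely parallels the paper's: both verify the rank identity $\rho(x)+\rho(y)=\rho(x\vee y)+\rho(x\wedge y)$ using the explicit description of the join from Proposition~\ref{prop:lattice}, and both exhibit the same anti-automorphism $\phi\colon(i,j)\mapsto(i,\,n-1-\ell(i)-j)$. Where you diverge is in handling the meet: the paper locates $x\wedge y$ directly by a dual case analysis (showing it equals $(b_k,q)$ or $(a_{i+j-k},p)$ depending on whether $j\ge k$ or $j\le k$, and checking $\rho(x\wedge y)+k=\rho(y)$), whereas you invoke $\phi$ to write $x\wedge y=\phi\big(\phi(x)\vee\phi(y)\big)$ and recycle the join formula --- a clean economy that avoids repeating the chain argument. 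One small point: the paper also records a second, left--right \emph{automorphism} $(i,j)\mapsto(i',j)$ with $i'=2^{\ell(i)}+2^{\ell(i)-1}-1-i$, giving an additional ``vertical'' symmetry; your $\phi$ alone suffices for self-duality (the natural reading of ``symmetric'' here), but you do not recover this extra structural feature.
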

    
    \begin{proof}
        The fact that $P_n$ is a lattice and is graded has already been established, so we now show that $P_n$ is modular. Consider two arbitrary elements $(u,p)$ and $(v,q)$, we will show that $\rho((u,p))+\rho((v,q))=\rho((u,p)\vee(v,q))+\rho((u,p)\wedge(v,q))$. Reusing the notations in Proposition \ref{prop:lattice}, suppose that the join of $(u,p)$ and $(v,q)$ is $(a_k,p+k)$ with $p+k=q+j$, so $\rho((u,p))+k=\rho((u,p)\vee(v,q))$. Similar to Proposition \ref{prop:lattice}, we can find the meet of $(u,p)$ and $(v,q)$. If $j\geq k$ then $(u,p)\wedge(v,q)=(b_k,q)$, and if $j\leq k$ then $(u,p)\wedge(v,q)=(a_{i+j-k},p)$. In both cases, we have $\rho((u,p)\wedge(v,q))+k=\rho((v,q))$. This proves the equality and hence modularity.
        
        Symmetry is more straightforward. we will show that the involution $(i,j) \rightarrow (i',j)$ where $i' = 2^{\text{level}(i)}+2^{\text{level}(i)-1}-1-i$ is an automorphism that gives vertical symmetry. This is an involution because $\text{level}(i) = \text{level}(i') = k$ and $i+i' = 2^{k}+2^{k-1}-1$. This is an automorphism because there are only three possible elements covering $(i,j)$: $(2i,j),(2i+1,j)$ and $\left(\left[\dfrac{i}{2}\right],j+1\right)$. The element $(2i,j)$ is mapped to $(2^{\text{level}(i)+1}+2^{\text{level}(i)}-1-2i,j) = (2i'+1,j)$, which covers $(i',j)$. One can check that this also holds for the other two elements analogously.

        Furthermore, we also have the involution $(i,j) \rightarrow (i,j')$ where $j' = (n-1) - \text{level}(i) - j$ is an anti-automorphism that gives horizontal symmetry. This can be proved similarly to the automorphism above.
    \end{proof}

    One may also ask if the lattice is a distributive lattice. Unfortunately, the answer is no because already for the case of $n\geq 4$, we see that the lattice $P_n$ contains the diamond $M_3$, consisting of five elements on three ranks such that there is a bottom element and a top element connected to each of the three elements in the middle.  This is a well-known obstruction to distributivity.
    
    Lastly, it can be seen that every poset $P_n$ can be seen as a sublattice of $P_{\infty}$. Consider the poset $P_{\infty}$ and let $f(i)$ be the number of elements whose rank is $i$, we have the following proposition on $f(i)$.
    
    \begin{proposition} \label{prop:f(i)}
        For all $i>0$, we have
        
        \[ f(i) = \begin{cases} \dfrac{2^{i+1}-1}{3} & \text{if } i \text{ is odd} \\ \dfrac{2^{i+1}-2}{3} & \text{if } i \text{ is even} \end{cases}. \]
    \end{proposition}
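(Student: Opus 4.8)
The plan is to count the rank-$i$ elements of $P_\infty$ directly. Recall from the definition that the elements of $P_\infty$ are exactly the pairs $(v,j)$ where $v$ ranges over all nodes of the infinite binary tree and $j$ over all nonnegative integers, and that the rank function is $\rho((v,j)) = \operatorname{level}(v) + 2j$. Since the number of nodes at level $\ell$ is $2^{\ell-1}$, the number of elements $(v,j)$ with $\operatorname{level}(v)=\ell$ and $j$ fixed is $2^{\ell-1}$. Hence an element has rank $i$ precisely when $\operatorname{level}(v) = i - 2j$ for some $j \ge 0$ with $i - 2j \ge 1$, so, writing $m = \lfloor (i-1)/2 \rfloor$,
\[ f(i) = \sum_{j=0}^{m} 2^{\,i-2j-1}. \]

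First I would evaluate this finite geometric series of ratio $1/4$:
\[ f(i) = 2^{\,i-1}\cdot\frac{1 - 4^{-(m+1)}}{1 - 1/4} = \frac{2^{\,i+1}}{3}\bigl(1 - 4^{-(m+1)}\bigr). \]
Then I would split into parity cases. If $i$ is odd, then $m = (i-1)/2$, so $4^{-(m+1)} = 2^{-(i+1)}$ and $f(i) = (2^{\,i+1}-1)/3$. If $i$ is even, then $m = (i-2)/2$, so $4^{-(m+1)} = 2^{-i}$ and $f(i) = (2^{\,i+1}-2)/3$. This is exactly the claimed formula.

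Alternatively, and perhaps more cleanly for the writeup, one can establish the recursion $f(i) = 2^{\,i-1} + f(i-2)$ for $i \ge 3$: the rank-$i$ elements with $j = 0$ are the $2^{\,i-1}$ nodes of level $i$, while those with $j \ge 1$ are in bijection with the rank-$(i-2)$ elements via $(v,j) \mapsto (v, j-1)$ (which is well defined since $\operatorname{level}(v) \ge 1$ and $j-1 \ge 0$). Together with the base values $f(1) = 1$ and $f(2) = 2$, a routine induction on $i$ (keeping the odd and even residues separate) then yields the formula.

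There is essentially no serious obstacle here: the computation is elementary, and the only point requiring a little care is the floor in the upper limit of the sum, which forces the split into the two parity branches and must be matched against the base cases $i = 1$ and $i = 2$.
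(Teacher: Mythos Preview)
Your argument is correct and follows essentially the same route as the paper: both count the rank-$i$ elements by noting that $\rho((v,j))=\operatorname{level}(v)+2j=i$ forces $\operatorname{level}(v)$ to range over the integers $\le i$ of the same parity as $i$, and then sum the $2^{\ell-1}$ nodes at each such level. The only cosmetic difference is that the paper phrases the sum as ``the number of nodes whose level has the same parity as $i$'' and evaluates it by recognizing the alternating binary string $1010\cdots$ (OEIS A000975), whereas you index by $j$ and compute the geometric series explicitly; your write-up is in fact more self-contained.
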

    
    \begin{proof}
        The easiest way to see this is that $f(i)$ is the number of nodes whose level has the same parity as $i$. The number of such nodes can actually be written in binary form as a sequence of $i$ digits, starting with 1, and the 1 and 0 digits alternate. This number is the sequence \href{http://oeis.org/A000975}{\em A000975} on OEIS and can be easily calculated using the formula above.
    \end{proof}
    
    \begin{corollary} \label{cor:endgame-fire}
    For all $n > 0$, the number of firing moves in the endgame, i.e. the number of vertices in lattice $P_n$, is $2^n - (n + 1)$.
    \end{corollary}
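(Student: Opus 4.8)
The statement is a pure counting identity about the index set of $P_n$, so the plan is simply to enumerate the moves $(i,j)$ with $\text{level}(i)+j<n$ and simplify. First I would stratify by the level $\ell=\text{level}(i)$ of the node: there are $2^{\ell-1}$ nodes on level $\ell$, and a node on level $\ell$ contributes an endgame move for each $j\in\{0,1,\dots,n-1-\ell\}$, i.e.\ exactly $n-\ell$ of them (and none once $\ell\ge n$). Hence
\[
|P_n|=\sum_{\ell=1}^{n-1}2^{\ell-1}\,(n-\ell).
\]
This already pins down the answer after a routine geometric-series manipulation: re-indexing by the number of remaining fires $w=n-\ell$ rewrites the sum as $\sum_{w=1}^{n-1}\bigl(2^{n-w}-1\bigr)$, which telescopes to $(2^n-2)-(n-1)=2^n-(n+1)$.

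A cleaner way to finish the computation, and one that explains the reappearance of the Eulerian number, is induction on $n$ using the chain of inclusions $P_1\subset P_2\subset\cdots$ noted in Section~\ref{sec:poset}. The elements of $P_n$ not already in $P_{n-1}$ are exactly the moves $(i,j)$ with $\text{level}(i)+j=n-1$, and there is precisely one such move for each node on levels $1$ through $n-1$; since those levels contain $2^{n-1}-1$ nodes, we get $|P_n|=|P_{n-1}|+(2^{n-1}-1)$. As $|P_1|=0=\bracenom{1}{1}$ and $\bracenom{n}{1}=2^n-(n+1)$ satisfies the very recursion $\bracenom{n}{1}=\bracenom{n-1}{1}+(2^{n-1}-1)$ recorded just before Proposition~\ref{prop:level-fire}, it follows that $|P_n|=\bracenom{n}{1}=2^n-(n+1)$.

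I do not anticipate a genuine obstacle here: the only points needing care are the boundary case (for $n=1$ the condition $\text{level}(i)+j<1$ has no solution, so $P_1=\emptyset$, consistent with $2^1-(1+1)=0$) and the observation that the ``new'' moves are indexed by nodes rather than by arbitrary $(i,j)$. If one prefers a dynamical argument to a combinatorial one, an alternative is to invoke the wave description of the endgame following Corollary~\ref{cor:confluence}: for $w=1,\dots,n-1$, wave $w$ fires exactly the $2^{n-w}-1$ nodes on levels $1$ through $n-w$, and by Corollary~\ref{cor:fire-same-times} the total number of firing moves is independent of the firing order, so summing the wave sizes again gives $2^n-(n+1)$.
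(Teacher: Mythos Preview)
Your proposal is correct, and your final alternative---summing the sizes $2^{n-w}-1$ of the endgame waves for $w=1,\dots,n-1$---is exactly the paper's primary proof. Your first two approaches go beyond what the paper does: stratifying $|P_n|=\sum_{\ell=1}^{n-1}2^{\ell-1}(n-\ell)$ by level and your induction via the filtration $P_{n-1}\subset P_n$ (counting the new layer $\text{level}(i)+j=n-1$ as $2^{n-1}-1$ moves) are both valid and the latter nicely explains the coincidence $|P_n|=\bracenom{n}{1}$ through the recursion recorded before Proposition~\ref{prop:level-fire}. One small wording issue: calling the passage from $\sum_\ell 2^{\ell-1}(n-\ell)$ to $\sum_w(2^{n-w}-1)$ a ``re-indexing'' understates it---it is really a change of summation order (or a second counting by wave rather than by level), not a simple substitution $w=n-\ell$. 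For completeness, the paper also sketches a second argument you do not mention: the order-reversing involution $(i,j)\mapsto(i,\,n-1-\text{level}(i)-j)$ on $P_n$ together with Proposition~\ref{prop:f(i)}.
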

    
    \begin{proof}
        The easiest way to see this is to look at the waves in the endgame. In the $i$th wave, we fire every node from level 1 to $n-i$, so there are $2^{n-i}-1$ moves in the $i$th wave. Thus, there are $\sum_{i=1}^{n-1}2^{n-i}-1=2^n-(n+1)$ moves in the endgame.
        
        Another way to prove this corollary is to note that the map $\phi$ that sends every move $(i,j)$ to $(i,n-1-$level$(i)-j)$ is an order-reversing $P_n$-isomorphism. Then the result follows from Proposition \ref{prop:f(i)}.
    \end{proof}
    
    \justify
    \section{Further Questions}\label{sec:conclusion}
    
    Two natural questions that arise in our study are
    
    \begin{enumerate}
        \item What are the reachable terminal configurations?
        \item How many reachable terminal configurations are there?
    \end{enumerate}
    
    The answer for the cases $n=1$ and 2 is straightforward. We also know the answer when $n=3$. By Theorem \ref{thm:main-thm} and Proposition \ref{lemma:type2}, we know that nodes 4, 2, 3, 7 have to contain the chips 1, 2, 6, 7 respectively. However, the other nodes do not have to contain a fixed chip, so in Figure \ref{Figure 12}, $(a,b,c)$ can be an arbitrary permutation of $(3,4,5)$. Indeed, we can check that every permutation gives a reachable terminal configuration. Thus, there are 6 reachable terminal configurations in total.
    
    \centering
    \begin{figure}[h!]
        \centering
        \includegraphics[width = 0.5\textwidth]{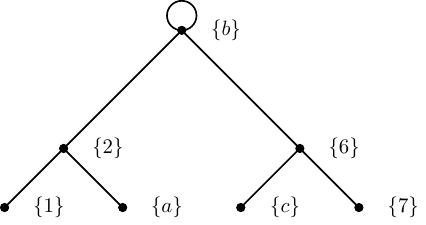}
        \caption{Terminal configurations when $n=3$}
        \label{Figure 12}
    \end{figure}
    
    \justify
    The situation quickly becomes more complicated when $n=4$. Again, by Theorem \ref{thm:main-thm} and Proposition \ref{lemma:type2}, we know that nodes 8, 4, 7, 15 have to contain the chips 1, 2, 14, 15 respectively, but the other 11 nodes do not have to contain a fixed chip. Even though there are additional relations among the nodes, those relations do not decrease the number of reachable configurations down to some small numbers. 
    In fact, as we were finishing this paper, Patrick Liscio coded up the $n=4$ case in python and shared his code with us. He found a total of 36220 possible terminal configurations, which can be found \href{https://drive.google.com/file/d/1b_QkfDahUJuDoF7YhMf_oCU6qNcSY_Sj/view?usp=sharing}{\em here}. In addition, by Proposition \ref{lemma:type1}, with $x_i$'s be the chips in Figure \ref{Figure 13}, we know that $x_1<x_5;x_2,x_3<x_4<x_5;x_2<x_6<x_{10};x_7<x_8<x_{10},x_9;x_7<x_{11}$. Indeed, these relations are all we can have, that is, for every other pair $(x_i,x_j)$, there is a reachable configuration in which $x_i<x_j$ and one in which $x_i>x_j$.
    
    \begin{figure}[h!]
        \centering
        \includegraphics[width = 0.5\textwidth]{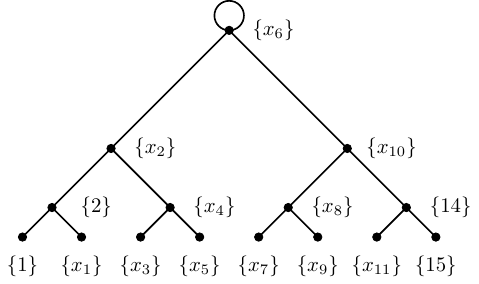}
        \caption{Terminal configurations when $n=4$}
        \label{Figure 13}
    \end{figure} 

    Another question is which terminal configuration are we most likely to get if we fire randomly. In the case $n=3$, we know that we will fire 6 moves, in which 4 are endgame moves. We already know that endgame moves do not affect the terminal configuration, so the terminal configuration is actually determined by the first 2 moves at the root. There are $\binom{7}{3}\binom{5}{3}=350$ possible ways we can fire the first 2 moves. We found that, with $\{a,b,c\}$ being the chips in Figure \ref{Figure 12}, the permutation $(a,b,c)=(3,4,5)$ occurs 216 times, $(3,5,4)$ and $(4,3,5)$ both occur 54 times, $(5,3,4)$ and $(4,5,3)$ both occur 12 times, and $(5,4,3)$ only occur twice. Specifically, to get the $(5,4,3)$ configuration, in the first 2 moves, we have to fire the chips in triplets as $\{1,2,3\}$ and $\{5,6,7\}$. To get the $(5,3,4)$ configuration, we can either fire the triplet $\{5,6,7\}$ then any triplet except $\{1,2,3\}$, or we can first fire any triplet among the chips $1,2,3,4$ except $\{1,2,3\}$ then fire $\{5,6,7\}$, which gives $12$ ways in total. The analogous argument applies for the the $(4,5,3)$ configuration, which also gives $12$ ways in total. To get the $(3,5,4)$ configuration, we have to fire $\{x,y,4\}$ where $x,y,<4$ and not fire $\{1,2,3\}$ or $\{5,6,7\}$, it can be checked that there are $54$ ways in total. Similarly, there are $54$ ways to get the $(4,3,5)$ configuration. Finally, every other way will give the $(3,4,5)$ configuration; thus, the most likely terminal configuration is the one in Figure \ref{Figure 14}.
    
    \centering
    \begin{figure}[h!]
        \centering
        \includegraphics[width = 0.5\textwidth]{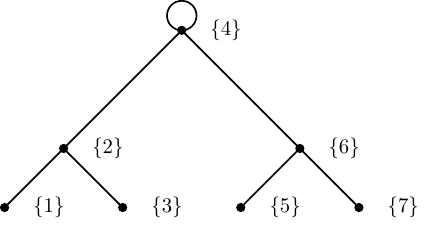}
        \caption{Most likely terminal configuration when $n=3$}
        \label{Figure 14}
    \end{figure}
    
    \justify
    In the $n=4$ case, it takes $23$ firing moves (see Corollary \ref{cor:num-fire}) to get from the initial configuration to a terminal configuration. In light of Corollary \ref{cor:endgame-fire}, $11$ of these moves take place in the endgame and do not affect the resulting terminal configuration. 
    Using the aforementioned python code from Patrick Liscio for the $n=4$ case, the most likely terminal configuration is the one in Figure \ref{Figure 15} with a frequency of approximately $3.846\%$. When $n\geq5$, however, the number of possible terminal configurations grows so fast that we could not get reliable data to estimate relative frequencies of different terminal configurations. Liscio's code also gave the min and max bounds for chip labels at different nodes, as shown in the data in the Appendix. In particular, even in the $n=4$ case, there are restrictions on the possible terminal configurations (i.e. on the values of $x_1$, $x_2$, $\dots$, $x_{11}$) beyond the fifteen inequalities mentioned above due to Propositions \ref{lemma:type1} and \ref{lemma:type2}.  This continues to be an interesting area for future work.
    
    \begin{figure}[h!]
        \centering
        \includegraphics[width = 0.5\textwidth]{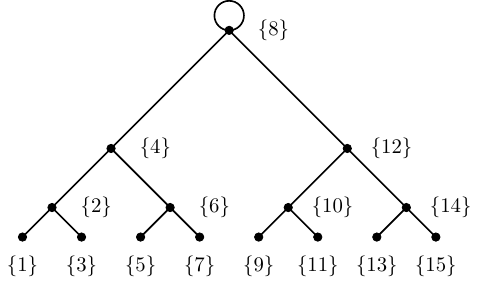}
        \caption{Most likely terminal configuration when $n=4$}
        \label{Figure 15}
    \end{figure} 
    
    It is noticeable that in both cases, the most likely terminal configuration is a binary search tree. Recall from section \ref{sec:counterex} that a \textbf{binary search tree} is a binary tree in which for every node, its chip is larger than \textit{all} chips in its left subtree, and is smaller than \textit{all} chips in its right subtree. For a full binary tree on $n$ levels using the labels $1,2,\dots, 2^n-1$, there is a unique such binary search tree.  We thus make the following conjecture.
    
    \begin{conjecture} \label{con:most-likely}
        For all $n$, the most likely terminal configuration is the unique binary search tree on the complete binary search tree on $2^n-1$ nodes. 
    \end{conjecture}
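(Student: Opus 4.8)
We outline one possible approach to Conjecture \ref{con:most-likely}, proceeding by induction on $n$. The base cases $n\le 4$ hold (by hand for $n\le 3$, and by Liscio's computation for $n=4$), so suppose $n\ge 5$ and that the statement holds for all smaller complete trees. By Corollary \ref{cor:confluence} the terminal configuration is determined by the non-endgame moves, and the endgame is effectively unlabeled --- each node fires the three chips it holds, in an order and with a branching structure governed by $P_n$ alone --- so the endgame contributes to $\Pr[T]$ a multiplicative factor that does not depend on $T$. Hence, under random firing (at each step a uniformly random legal move), the distribution of $T$ is the distribution of the terminal configuration determined by the non-endgame portion of the run, and it is this portion we must analyze.

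The core of the plan is a branch decomposition. Each terminal configuration $T$ determines a \emph{split} --- the chip it places at the root, the label set $L(T)$ filling the left subtree $(2)$, the label set $R(T)$ filling the right subtree $(3)$, and the induced configurations $T_L,T_R$ on the two subtrees, with $|L(T)|=|R(T)|=2^{n-1}-1$ by Proposition \ref{prop:height} --- and one would like a conditional factorization
\[
\Pr[T]\;=\;\Pr[\tau_T]\cdot\Pr\!\big[\,T_L \mid \tau_T\,\big]\cdot\Pr\!\big[\,T_R \mid \tau_T\,\big],
\]
where the \emph{root-trace} $\tau_T$ records, in order, the triples fired at node $1$ --- equivalently the interleaved history of chips sent from, and returned to, the root across the two edges at $1$. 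Granting this, one next shows that, conditioned on a root-trace $\tau$, the subtree process at $(2)$ is equivalent to a random-firing instance on $n-1$ levels with the labels that $\tau$ routes into $(2)$ (this reduction itself needs proof, since the branch is fed chips over time rather than from a single pile); then by the inductive hypothesis together with invariance of the process under relabeling chips, the conditionally most likely $T_L$ is the binary search tree on those labels, and likewise for $T_R$. It would then follow that the global maximum of $\Pr[T]$ is attained with both sub-configurations equal to binary search trees, and equals $\max_\tau \Pr[\tau]\cdot\Pr[\mathrm{BST}_L\mid\tau]\cdot\Pr[\mathrm{BST}_R\mid\tau]$.

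The induction closes once one proves that this last maximum occurs at the \emph{median} trace --- the one that retains chip $2^{n-1}$ at the root and routes $\{1,\dots,2^{n-1}-1\}$ into $(2)$ and $\{2^{n-1}+1,\dots,2^n-1\}$ into $(3)$ --- since the global binary search tree restricts to the binary search trees on those two label sets. We expect this to be the main obstacle. Stripped of the recursion it is a ``sorting-is-most-likely'' statement at a single node: repeatedly draw a uniformly random triple from a pile, into which chips are also returned from below in a prescribed pattern, routing the smallest down-left, the largest down-right, and the median back up; then the most probable net bipartition of the labels --- weighted by the number of times each is handled --- is the balanced, median one. Two features make even this delicate: the random-firing measure is not uniform over runs, since the number of legal moves varies along a run, so the per-step reweighting must be shown compatible with the branch decomposition; and the single-node statement is a genuine inequality among multinomial-type sums rather than a symmetry. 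A natural attempt at the latter is an explicit measure-preserving injection from runs realizing a non-median split into runs realizing the median split, built from ``bubble-sort''-style adjacent transpositions of the chosen triples, but making such a map well defined in the presence of the returned chips and the endgame looks hard. As an intermediate step it would already be worthwhile to prove that the binary search tree is a \emph{local} maximum of $T\mapsto\Pr[T]$, and to extract from the decomposition a recursion whose solution reproduces the observed frequency $\approx 3.846\%$ at $n=4$.
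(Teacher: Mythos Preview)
The paper does not prove this statement: it is presented explicitly as a conjecture in Section~\ref{sec:conclusion}, supported only by the $n\le 3$ hand computations and Liscio's enumeration for $n=4$. So there is no paper proof to compare against, and your write-up is, as you yourself frame it, an outline of a strategy with several acknowledged gaps rather than a proof.

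That said, let me flag what I see as the most serious structural obstacle in your plan, beyond the ones you already name. The factorization
\[
\Pr[T]\;=\;\Pr[\tau_T]\cdot\Pr[T_L\mid\tau_T]\cdot\Pr[T_R\mid\tau_T]
\]
presumes that $T_L$ and $T_R$ are conditionally independent given the root-trace $\tau_T$. Under the natural ``uniformly random legal move at each step'' model, this is not obviously true: a legal move in $(2)$ competes for selection with every legal move in $(3)$, so the number of legal moves available in one branch at a given moment affects the per-step probabilities in the other branch. Conditioning on $\tau_T$ fixes the interface with the root but does not decouple the two scheduling processes. You allude to this when you note that the per-step reweighting ``must be shown compatible with the branch decomposition,'' but I would emphasize that this is not a technicality to be checked---it is a genuine independence claim that may simply be false, in which case the whole inductive scheme collapses. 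A cleaner route might be to first identify a random-firing model under which the branches \emph{are} conditionally independent (e.g.\ one that selects a node uniformly among those ready and then a uniform triple), prove the conjecture there, and then argue separately that the two models agree on the argmax.

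Your identification of the ``median trace is most likely'' step as the main remaining obstacle is apt, and your proposed adjacent-transposition injection is a reasonable thing to try, but note that even establishing this would not close the induction without the independence above.
\qed
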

    
    \textbf{Remark:} One way to view the terminal configuration is to list the chips from left to right. This will turn any terminal configuration to a permutation in $S_{2^n-1}$. For example, the binary tree in Figure \ref{Figure 15} corresponds to the identity permutation in $S_{15}$ (see the Appendix for another example). Thus, another way to phrase Conjecture \ref{con:most-likely} is "For all $n$, the most likely terminal configuration corresponds to the identity permutation in $S_{2^n-1}$."
    
    While it might be tempting to suggest that configurations with fewer inversions occur with higher probability, this is not necessarily true. For example, configurations with two inversions in the middle occur more frequently than those with one inversion towards either side of the tree.

    Another natural question is what happens if we start with any number of chips, not necessarily $2^n-1$. However, by Corollary \ref{cor:pattern}, if we do not start with $2^n-1$ chips, in the terminal configurations, some node has two chips. Thus, some endgame moves will occur when the node has more than three chips. This means that we will not have confluence in the endgame. Hence, many results here will not hold. For example, if we start with $2^n$ chips, we can keep the chip labeled $1$ at the root and never fire it. Hence not only would the first part of our main theorem (Theorem \ref{thm:main-thm}) no longer hold, but the second part of our main theorem regarding the labels of bottom straight left and right descendants would also fail.  It is an interesting open question for future research to determine what restrictions, if any, there are on labelings of terminal configurations when we do not start with $2^n-1$ chips at the root.
    
    \textbf{Postscript:} As we were completing this paper, we learned about work by Hugunin and Roman at an REU at University of Washington \cite{roman_hugunin_2017} that also considered labeled chip-firing on trees, although under different definitions than our set-up.  It would be interesting for future work to compare our two approaches and the different questions that the two projects focused on.





\bibliography{bibliography}
\bibliographystyle{alpha}
	
	\newpage
	
	\appendix
	
	\section{Liscio's Code Results}
	
	How to read:
	
    - First section:
    
    Max/min at [Node]: [Chip]
    
    [Tree in which max/min occurs]\\

    Here [Node] is labeled using the convention of Figure \ref{Figure 2}, i.e. the root is node $1$ and every node $i$ has a left child labeled $2i$ and a right child labeled $2i+1$.
    
    [Chip] is the label of the unique chip at [Node] in the terminal configuration.  Max/min refers to the label of the chip at [Node].\\

    - Second section:
    
    [Tree], [\# Inversions]\\
    
    - Tree: read from left to right
    \\

Ex: [1, 2, 3, 4, 5, 6, 7, 12, 8, 9, 10, 13, 11, 14, 15] means
    
    \hspace{200pt}12
    
    \vspace{10pt}
    
    \hspace{100pt}4\hspace{200pt} 13
    
    \vspace{10pt}
    
    \hspace{50pt}2\hspace{100pt}6\hspace{100pt}9\hspace{100pt}14
    
    \vspace{10pt}
    
    \hspace{25pt}1\hspace{50pt}3\hspace{40pt}5\hspace{50pt}7\hspace{50pt}8\hspace{40pt}10\hspace{50pt}11\hspace{40pt}15\\
    
    RESULT:\\

Number of configurations: 36220\\

Max at 1: 12

[1, 2, 3, 4, 5, 6, 7, 12, 8, 9, 10, 13, 11, 14, 15]

Min at 1: 4

[1, 2, 5, 3, 6, 7, 8, 4, 9, 10, 11, 12, 13, 14, 15]\\

Max at 2: 8

[1, 2, 3, 8, 4, 9, 10, 11, 5, 6, 7, 12, 13, 14, 15]

Min at 2: 3

[1, 2, 4, 3, 5, 6, 7, 8, 9, 10, 11, 12, 13, 14, 15]\\

Max at 3: 13

[1, 2, 3, 4, 5, 6, 7, 8, 9, 10, 11, 13, 12, 14, 15]

Min at 3: 8

[1, 2, 3, 4, 9, 10, 11, 5, 6, 7, 12, 8, 13, 14, 15]\\

Max at 4: 2

[1, 2, 3, 4, 5, 6, 7, 8, 9, 10, 11, 12, 13, 14, 15]

Min at 4: 2

[1, 2, 3, 4, 5, 6, 7, 8, 9, 10, 11, 12, 13, 14, 15]\\

Max at 5: 11

[1, 2, 3, 4, 5, 11, 12, 6, 7, 8, 9, 10, 13, 14, 15]

Min at 5: 5

[1, 2, 6, 3, 4, 5, 8, 7, 9, 10, 11, 12, 13, 14, 15]\\

Max at 6: 11

[1, 2, 3, 4, 5, 6, 7, 9, 8, 11, 12, 13, 10, 14, 15]

Min at 6: 5

[1, 2, 3, 6, 7, 8, 9, 10, 4, 5, 11, 12, 13, 14, 15]\\

Max at 7: 14

[1, 2, 3, 4, 5, 6, 7, 8, 9, 10, 11, 12, 13, 14, 15]

Min at 7: 14

[1, 2, 3, 4, 5, 6, 7, 8, 9, 10, 11, 12, 13, 14, 15]\\

Max at 8: 1

[1, 2, 3, 4, 5, 6, 7, 8, 9, 10, 11, 12, 13, 14, 15]

Min at 8: 1

[1, 2, 3, 4, 5, 6, 7, 8, 9, 10, 11, 12, 13, 14, 15]\\

Max at 9: 10

[1, 2, 10, 3, 4, 5, 12, 6, 7, 8, 9, 11, 13, 14, 15]

Min at 9: 3

[1, 2, 3, 4, 5, 6, 7, 8, 9, 10, 11, 12, 13, 14, 15]\\

Max at 10: 10

[1, 2, 3, 4, 10, 11, 12, 5, 6, 7, 8, 9, 13, 14, 15]

Min at 10: 3

[1, 2, 4, 5, 3, 6, 7, 8, 9, 10, 11, 12, 13, 14, 15]\\

Max at 11: 13

[1, 2, 3, 4, 5, 6, 13, 7, 8, 9, 10, 11, 12, 14, 15]

Min at 11: 7

[1, 2, 3, 4, 5, 6, 7, 8, 9, 10, 11, 12, 13, 14, 15]\\

Max at 12: 9

[1, 2, 3, 4, 5, 6, 7, 8, 9, 10, 11, 12, 13, 14, 15]

Min at 12: 3

[1, 2, 4, 5, 6, 7, 8, 9, 3, 10, 11, 12, 13, 14, 15]\\

Max at 13: 13

[1, 2, 3, 4, 5, 6, 7, 8, 9, 10, 13, 11, 12, 14, 15]

Min at 13: 6

[1, 2, 3, 7, 8, 9, 10, 11, 4, 5, 6, 12, 13, 14, 15]\\

Max at 14: 13

[1, 2, 3, 4, 5, 6, 7, 8, 9, 10, 11, 12, 13, 14, 15]

Min at 14: 6

[1, 2, 3, 5, 7, 8, 9, 10, 4, 11, 12, 13, 6, 14, 15]\\

Max at 15: 15

[1, 2, 3, 4, 5, 6, 7, 8, 9, 10, 11, 12, 13, 14, 15]

Min at 15: 15

[1, 2, 3, 4, 5, 6, 7, 8, 9, 10, 11, 12, 13, 14, 15]\\

SAMPLE OF THE FULL RESULT:

[1, 2, 3, 4, 5, 6, 7, 8, 9, 10, 11, 12, 13, 14, 15], 0

[1, 2, 3, 4, 5, 6, 7, 8, 9, 10, 12, 11, 13, 14, 15], 1

[1, 2, 3, 5, 4, 6, 7, 8, 9, 10, 11, 12, 13, 14, 15], 1

[1, 2, 4, 3, 5, 6, 7, 8, 9, 10, 11, 12, 13, 14, 15], 1

[1, 2, 3, 4, 5, 6, 7, 9, 8, 10, 11, 12, 13, 14, 15], 1

[1, 2, 3, 4, 5, 6, 8, 7, 9, 10, 11, 12, 13, 14, 15], 1

[1, 2, 3, 4, 5, 6, 7, 8, 9, 10, 11, 13, 12, 14, 15], 1

[1, 2, 3, 4, 5, 6, 7, 10, 8, 9, 11, 12, 13, 14, 15], 2

[1, 2, 4, 5, 3, 6, 7, 8, 9, 10, 11, 12, 13, 14, 15], 2

[1, 2, 3, 4, 5, 6, 7, 8, 9, 10, 13, 11, 12, 14, 15], 2

[1, 2, 3, 4, 5, 6, 7, 9, 8, 10, 12, 11, 13, 14, 15], 2

[1, 2, 4, 3, 5, 6, 7, 8, 9, 10, 11, 13, 12, 14, 15], 2

[1, 2, 3, 4, 5, 6, 7, 8, 9, 10, 12, 13, 11, 14, 15], 2

[1, 2, 3, 4, 5, 6, 8, 9, 7, 10, 11, 12, 13, 14, 15], 2

[1, 2, 5, 3, 4, 6, 7, 8, 9, 10, 11, 12, 13, 14, 15], 2

[1, 2, 3, 4, 5, 6, 9, 7, 8, 10, 11, 12, 13, 14, 15], 2

[1, 2, 3, 5, 4, 6, 7, 8, 9, 10, 11, 13, 12, 14, 15], 2

[1, 2, 4, 3, 5, 6, 7, 8, 9, 10, 12, 11, 13, 14, 15], 2

[1, 2, 4, 3, 5, 6, 8, 7, 9, 10, 11, 12, 13, 14, 15], 2

[1, 2, 4, 3, 5, 6, 7, 9, 8, 10, 11, 12, 13, 14, 15], 2

[1, 2, 3, 5, 4, 6, 7, 9, 8, 10, 11, 12, 13, 14, 15], 2

[1, 2, 3, 4, 5, 7, 8, 6, 9, 10, 11, 12, 13, 14, 15], 2

[1, 2, 3, 4, 5, 6, 7, 9, 8, 10, 11, 13, 12, 14, 15], 2

[1, 2, 3, 4, 5, 6, 8, 7, 9, 10, 12, 11, 13, 14, 15], 2

[1, 2, 3, 5, 4, 6, 7, 8, 9, 10, 12, 11, 13, 14, 15], 2

[1, 2, 3, 5, 4, 6, 8, 7, 9, 10, 11, 12, 13, 14, 15], 2

[1, 2, 3, 4, 5, 6, 8, 7, 9, 10, 11, 13, 12, 14, 15], 2

~
$\hspace{15em} \vdots$

~

[1, 2, 9, 5, 4, 10, 13, 8, 3, 6, 12, 11, 7, 14, 15], 25

[1, 2, 4, 5, 10, 11, 13, 9, 3, 7, 8, 12, 6, 14, 15], 25

[1, 2, 9, 4, 5, 10, 13, 8, 3, 7, 12, 11, 6, 14, 15], 25

[1, 2, 10, 5, 4, 8, 13, 11, 3, 6, 9, 12, 7, 14, 15], 25

[1, 2, 10, 4, 8, 9, 13, 7, 3, 5, 6, 11, 12, 14, 15], 25

[1, 2, 9, 4, 7, 10, 13, 5, 3, 8, 12, 11, 6, 14, 15], 25

[1, 2, 8, 6, 4, 11, 13, 7, 3, 5, 12, 10, 9, 14, 15], 25

[1, 2, 8, 4, 7, 11, 13, 5, 3, 9, 12, 10, 6, 14, 15], 25

[1, 2, 9, 6, 4, 10, 13, 7, 3, 5, 12, 11, 8, 14, 15], 25

[1, 2, 8, 4, 5, 11, 13, 9, 3, 7, 12, 10, 6, 14, 15], 25

[1, 2, 7, 6, 4, 11, 13, 9, 3, 5, 12, 10, 8, 14, 15], 25

[1, 2, 8, 5, 4, 11, 13, 9, 3, 6, 12, 10, 7, 14, 15], 25

[1, 2, 10, 6, 4, 7, 13, 11, 3, 5, 9, 12, 8, 14, 15], 25

[1, 2, 10, 5, 4, 9, 13, 8, 3, 6, 11, 12, 7, 14, 15], 25

[1, 2, 10, 6, 4, 9, 13, 7, 3, 5, 11, 12, 8, 14, 15], 25

\end{document}